\documentclass[12pt]{article}


\setlength{\intextsep}{3em}
\setlength{\textfloatsep}{3em}

\usepackage[letterpaper, hmargin=1.1in, top=1in, bottom=1.2in, footskip=0.6in]{geometry}

\makeatletter\def\SetFigFont#1#2#3#4#5{\small}


\usepackage{amsmath} 
\usepackage{amssymb}
\usepackage{amsfonts}
\usepackage{latexsym}
\usepackage{amsthm}
\usepackage{amsxtra}
\usepackage{amscd}
\usepackage{url}
\usepackage{bm}

\usepackage{graphicx, color}



\flushbottom
\numberwithin{equation}{section}
\numberwithin{figure}{section}



\theoremstyle{plain} 
\newtheorem{theorem}{Theorem}
\newtheorem*{theorem*}{Theorem}
\newtheorem{lemma}[theorem]{Lemma}
\newtheorem*{lemma*}{Lemma}

\newtheorem*{corollary*}{Corollary}
\newtheorem{proposition}[theorem]{Proposition}
\newtheorem*{proposition*}{Proposition}

\newtheorem*{definition*}{Definition}

\newtheorem*{conjecture*}{Conjecture}

\newtheorem*{example*}{Example}

\newtheorem*{remark*}{Remark}





\definecolor{darkred}{rgb}{0.9,0,0.3}
\definecolor{darkblue}{rgb}{0,0.3,0.9}

\usepackage{ifthen}
\def\comment#1{\ifthenelse{\isodd{\value{page}}}{\marginpar{\raggedright\scriptsize{\textcolor{darkred}{#1}}}}{\marginpar{\raggedleft\scriptsize{\textcolor{darkred}{#1}}}}}




\renewcommand{\leq}{\leqslant}

\renewcommand{\epsilon}{\varepsilon}





\newcommand{\be}{\begin{equation}}
\newcommand{\eeq}{\end{equation}}
\newcommand{\bea}{\begin{align}}

\begin{document}
\title{Central limit theorem near the critical temperature for the overlap in the 2-spin spherical SK model}

\author{Vu Lan Nguyen\thanks{{\tt nguyenvulan47@gmail.com}} \and Philippe Sosoe\thanks{{\tt psosoe@math.cornell.edu}, P.S.'s research is supported by NSF grant DMS-1811093.}}

\maketitle

\abstract{We prove a central limit theorem for the normalized overlap in the spherical SK model in the high temperature phase. The convergence holds almost surely with respect to the disorder variables, and the inverse temperature can approach the critical value at a polynomial rate with any exponent strictly greater than $1/3$.}

\section{Introduction}
For $N\in \mathbb{N}$, the spherical Sherrington-Kirkpatrick (SSK) Hamiltonian is given by
\begin{equation}
\label{def:H_spherical}
H_N(\sigma)= \sum_{1\leq i,j\leq N} -\frac{1}{\sqrt{ N}}g_{ij}\sigma_i\sigma_j,
\end{equation}
for $\sigma:=(\sigma_1,\cdots,\sigma_N)\in \mathbb{S}^{N-1}=\{ \sigma \in \mathbb{R}^N, |\sigma|=\sqrt{N}\}$. Here the $g_{ij}$ are independent standard Gaussian random variables. This model was introduced in \cite{KTJ} and has been extensively studied \cite{talagrand}, \cite{crisanti-sommers}, \cite{talagrand-panchenko}. It is a spin glass model whose study is somewhat simpler than that of the related SK model with Ising spins, because a number of more explicit computations are possible.

One such computation is the remarkable result by J. Baik and J.O. Lee \cite{baiklee} concerning the fluctuations of the logarithm \emph{partition function} $Z_N(\beta)$, defined by
\begin{equation}
\label{def:Z_spherical}
Z_N(\beta)=\frac{1}{|\mathbb{S}^{N-1}|}\int_{\mathbb{S}^{N-1}}e^{-\beta H_N(\sigma)}\mathrm{d}\omega(\sigma),
\end{equation}
where $\mathrm{d}\omega (\sigma)$ is the uniform measure on the sphere $\mathbb{S}^{N-1}$. 
\begin{theorem*}[Baik and Lee \cite{baiklee}]
Let
\[F_N(\beta)=\frac{1}{N}\log Z_N(\beta)\]
be the free energy of the spherical SK model at inverse temperarture $\beta$. Then we have the following, where $\rightarrow$ denotes convergence in distribution:
\begin{enumerate}
\item \begin{equation}\label{eqn: high-T}
N(F_N(\beta)-F(\beta)\rightarrow \mathcal{N}(f,\alpha),
\end{equation}
where
\[f=\frac{1}{4}\log(1-16\beta^2)-8\beta^2, \quad \alpha=-\frac{1}{2}\log(1-16\beta^2)-8\beta^2.\]
\item In the low temperature regime $\beta>1$,
\begin{equation}\label{eqn: TW}
\frac{1}{\beta-1}N^{2/3}(F_N(\beta)-F(\beta))\rightarrow TW_1,
\end{equation}
the Tracy-Widom GOE distribution.
\end{enumerate}
\end{theorem*}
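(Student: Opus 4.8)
\emph{Proof strategy.} Following Kosterlitz--Thouless--Jones and Baik--Lee, the plan is to reduce $Z_N(\beta)$ to a one-dimensional contour integral governed by the spectrum of the disorder and then carry out a steepest-descent analysis in which the phase is random. Replacing the matrix $(g_{ij})$ in \eqref{def:H_spherical} by its symmetric part yields (a multiple of) a real Wigner matrix $M$, whose eigenvalues $\lambda_1\geq\cdots\geq\lambda_N$ have empirical distribution converging to a semicircle law $\mu_{\mathrm{sc}}$ with top edge $\lambda_+$; diagonalizing $M$ and using rotational invariance of the uniform measure on $\mathbb{S}^{N-1}$ gives $Z_N(\beta)=\E_{\b u}\!\big[\exp\big(c\beta N\sum_i\lambda_i u_i^2\big)\big]$ with $\b u$ uniform on the unit sphere. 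Representing the constraint $|\b u|^2=1$ as a Gaussian integral and integrating out the $u_i$ produces
\[
Z_N(\beta)=C_N\int_{\gamma-\ii\infty}^{\gamma+\ii\infty}\ee^{NG_N(z)}\,\dd z,\qquad
G_N(z)=\beta z-\frac{1}{2N}\sum_{i=1}^{N}\log(z-\lambda_i),
\]
valid for any $\gamma>\lambda_1$, where I suppress an overall affine change of variable and the normalization of $M$ (fixed so that the critical value is $\beta_c=1$ and $m_{\mathrm{sc}}(\lambda_+)=2\beta_c$). Everything now reduces to this integral; the two regimes are distinguished by whether the saddle of $G_N$ can sit away from $\lambda_+$ or is pinned against it.

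\emph{High temperature.} The saddle $z_N$ solves $\frac1N\sum_i(z_N-\lambda_i)^{-1}=2\beta$, whose deterministic limit $m_{\mathrm{sc}}(z_\infty)=2\beta$ admits a root strictly to the right of $\lambda_+$ precisely when $\beta<\beta_c$. I would apply Laplace's method, $\log Z_N=\log C_N+NG_N(z_N)+\half\log\frac{2\pi}{NG_N''(z_N)}+o(1)$, and Taylor-expand $NG_N(z_N)$ around the \emph{deterministic} point $z_\infty$. Since $z_N$ is a critical point of $G_N$, and since the classical fact that linear eigenvalue statistics of Wigner matrices fluctuate at $O(1)$ gives $G_N'(z_\infty)=O(N^{-1})$ and hence $|z_N-z_\infty|=O(N^{-1})$, the linear and quadratic Taylor terms in $z_N-z_\infty$ are both $o(1)$; thus $NG_N(z_N)=N\beta z_\infty-\half\sum_i\log(z_\infty-\lambda_i)+o(1)$, and the only randomness surviving at order one is the linear spectral statistic of $t\mapsto\log(z_\infty-t)$, a function analytic on a neighbourhood of $[\lambda_-,\lambda_+]$. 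The classical CLT for linear eigenvalue statistics of real Wigner matrices then gives a Gaussian limit, and computing its mean and variance --- via the Chebyshev expansion of $\log(z_\infty-\cdot)$, the orthogonal-class variance formula and GOE mean correction, the explicit value of $\int\log(z_\infty-\lambda)\,\dd\mu_{\mathrm{sc}}(\lambda)$, and the bookkeeping of $\log C_N$ together with the $\log N$ from the Gaussian correction --- produces the constants $f,\alpha$ in \eqref{eqn: high-T}. Making this rigorous needs uniform control of $\frac1N\sum_i\log(z-\lambda_i)$ and its derivatives for $z$ near $[\lambda_-,\lambda_+]$, supplied by eigenvalue rigidity.

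\emph{Low temperature.} For $\beta>\beta_c$ the equation $m_{\mathrm{sc}}(z)=2\beta$ has no root beyond $\lambda_+$: the unconstrained saddle would cross the spectral edge, but the nearest singularity of $\ee^{NG_N}$ is the branch point $z=\lambda_1$, and the integral localizes there, with effective saddle $z_N=\lambda_1+\delta$ and $\delta=\frac{1}{2N(\beta-\beta_c)}(1+o(1))$. Extracting the dominant term, $NG_N(z_N)=N\beta\lambda_1-\half\sum_{i\geq2}\log(\lambda_1-\lambda_i)+(\text{deterministic})+o(1)$; since $\frac{\dd}{\dd\lambda_1}\sum_{i\geq2}\log(\lambda_1-\lambda_i)=\sum_{i\geq2}(\lambda_1-\lambda_i)^{-1}=N\,m_{\mathrm{sc}}(\lambda_+)(1+o(1))=2\beta_c N(1+o(1))$, expanding around $\lambda_1=\lambda_+$ gives, to leading order,
\[
F_N(\beta)-F(\beta)=(\beta-\beta_c)\,(\lambda_1-\lambda_+)\,(1+o(1)).
\]
Edge universality for real Wigner matrices yields $N^{2/3}(\lambda_1-\lambda_+)\to TW_1$ (in the normalization for which $\beta_c=1$), and --- after using rigidity to control $\sum_{i\geq2}\log(\lambda_1-\lambda_i)$ and the error in the Laplace approximation --- dividing by $\beta-\beta_c$ gives \eqref{eqn: TW}.

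\emph{Main obstacle.} The crux in both regimes is to make the saddle-point analysis rigorous and uniform for the random, singular integrand $\ee^{NG_N(z)}$: away from the dominant point one must bound $\re\,G_N(z)$ using rigidity of the $\lambda_i$ down to the optimal scale. The low-temperature case is especially delicate because the width of the Gaussian peak at the saddle, $\sim(NG_N''(z_N))^{-1/2}\sim N^{-1}$, is comparable to the distance $\delta$ to the branch point at $\lambda_1$, so the naive Laplace formula must be replaced by a careful local model near $\lambda_1$ that uses the $N^{-2/3}$ spacing of the extreme eigenvalues. The other delicate point is pinning down $f$ and $\alpha$ exactly: this requires the CLT for linear eigenvalue statistics of the orthogonal ensemble in the precise form that tracks both the mean shift and the variance correction.
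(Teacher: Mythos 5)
This theorem is quoted from Baik--Lee \cite{baiklee} and is not proved in the present paper, which only recalls that their argument rests on the single contour-integral representation of $Z_N(\beta)$ and a delicate steepest-descent analysis; your sketch follows exactly that route (contour representation with $G(z)=2\beta z-\frac1N\sum_i\log(z-\lambda_i)$, saddle point analysis, CLT for the linear eigenvalue statistic $\sum_i\log(z_\infty-\lambda_i)$ in the high-temperature phase, and, in the low-temperature phase, the saddle pinned at distance $\sim 1/(2N(\beta-1))$ from $\lambda_1$ so that the fluctuations reduce to $(\beta-1)(\lambda_1-\lambda_+)$ and edge universality gives $TW_1$). So the proposal is correct in outline, correctly identifies the genuinely delicate points (rigidity-based control away from the saddle, and the peak width being comparable to the distance to the branch point at $\lambda_1$), and takes essentially the same approach as the cited proof.
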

For the Ising SK Hamiltonian, the analogous result to \eqref{eqn: high-T} was derived by Aizenman, Lebowitz and Ruelle in the high temperature phase, while the fluctuations of the partition function in the low temperature remain inaccessible. The best results are bounds for the order of fluctuations which fall far short of their expected size, see \cite[Theorem 6.3]{chatterjee}.

Baik and Lee's result is based on a contour integral representation for the partition function, which reduces the evaluation of $F_N(\beta)$ to a (delicate) saddle point analysis involving quantities studied in random matrix theory. In a subsequent series of work, these authors exploit similar representations to derive the thermodynamic limit of variants of the SSK model, including a model with an additional ferromagnetic (Curie-Weiss) interaction in the Hamiltonian \cite{baiklee1}, the bipartite SSK model \cite{baiklee2}, for which the computation of the first order behavior at all temperatures was open, and in a recent joint work with H. Wu, the SSK/Curie-Weiss model at the critical coupling strength \cite{baiklee3}.

In this note, we observe that a representation as a \emph{double} contour integral is available for the Laplace transform of the overlap $R_{12}$ with respect to the Gibbs measure, a key quantity in the study of spin glasses. The overlap between two replicas $\sigma_1$ and $\sigma_2$ is defined as:
\begin{equation}
\label{def:overlap_R}
R_{12}=\frac{1}{{N}}\langle\sigma_1,\sigma_2\rangle.
\end{equation}
More explicitly:
\begin{equation}
\label{computation:Laplace_R}
\langle e^{tR_{12}} \rangle_{\beta,N}= \frac{1}{Z_N^2} \frac{1}{|\mathbb{S}^{N-1}|^2}\int_{(\mathbb{S}^{N-1})^2} \exp \bigg (-\beta(H_N(\sigma_1)+H_N(\sigma_2))+tR_{12}\bigg )d\omega(\sigma_1) d\omega(\sigma_2).
\end{equation}
As a consequence, we show that the overlap, properly rescaled, has Gaussian fluctuations in the high temperature phase, including at inverse temperatures close to the critical point:
\begin{theorem}\label{thm: clt}
Let $\beta=\beta_N=1-cN^{-1/3+\tau}$ with $0<\tau<1/3$. Then there is a $c(\tau)>0$ such that
\[\langle e^{tR_{12}}\rangle_{\beta,N} = e^{t^2}+O(N^{-c(\tau)})\]
with probability at least $1-N^{-K}$, for any $K$, as $N\rightarrow \infty$.

In particular, for almost every realization of the disorder variables $g_{ij}$, the overlap $R_{12}$ converges in distribution to a Gaussian random variable with mean 0 and variance 2.
\end{theorem}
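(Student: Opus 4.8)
The plan is to realize $\langle e^{tR_{12}}\rangle_{\beta,N}$ as a ratio of a double contour integral to the square of the single contour integral Baik and Lee use for $Z_N$, and then run a steepest descent argument whose errors remain controlled as $\beta_N\uparrow1$; the one external input is quantitative rigidity for the eigenvalues of the disorder matrix. For \emph{Step 1}, symmetrize, $A=\tfrac12(g_{ij}+g_{ji})_{i,j}$, and let $\lambda_1\geq\cdots\geq\lambda_N$ be the eigenvalues of the rescaled matrix $M$ in the normalization of \cite{baiklee} (for which the critical inverse temperature is $1$, the limiting spectral edge is $2$, and $\lambda_1\to2$). Exponentiating each spherical constraint by $\delta(|\sigma_j|^2-N)=\tfrac1{4\pi i}\int_{\Gamma}e^{\frac{z_j}2(|\sigma_j|^2-N)}\,dz_j$ on a vertical line makes the integrand Gaussian in $(\sigma_1,\sigma_2)\in\R^{2N}$, with covariance that block-diagonalizes into $N$ blocks of size $2$ coupling the $i$-th eigen-coordinates of the two replicas, the $i$-th block having determinant $\propto(z_1-\lambda_i)(z_2-\lambda_i)-(t/N)^2$; integrating out $(\sigma_1,\sigma_2)$ yields
\[
\langle e^{tR_{12}}\rangle_{\beta,N}=\frac{\displaystyle\iint_{\Gamma\times\Gamma}e^{\frac N2(G(z_1)+G(z_2))}D_N(z_1,z_2;t)\,dz_1\,dz_2}{\displaystyle\Bigl(\int_{\Gamma}e^{\frac N2 G(z)}\,dz\Bigr)^{2}},\quad G(z)=\beta z-\tfrac1N\sum_i\log(z-\lambda_i),\quad D_N(z_1,z_2;t)=\prod_i\Bigl(1-\tfrac{(t/N)^2}{(z_1-\lambda_i)(z_2-\lambda_i)}\Bigr)^{-1/2},
\]
valid for all $t$ (possibly $N$-dependent) keeping the $2\times2$ blocks positive definite on $\Gamma$; since $D_N(\cdot,\cdot;0)\equiv1$, the denominator is $\propto Z_N^2$ and the identity matches \eqref{computation:Laplace_R}.

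For \emph{Step 2}, $G$ has a unique real critical point $\hat z>\lambda_1$ with $m_N(\hat z)=\beta$, $m_N(z):=\tfrac1N\sum_i(z-\lambda_i)^{-1}$, and along $\{\re z=\hat z\}$ the function $\re G$ decreases monotonically away from $\hat z$, so the Baik--Lee steepest descent applies once its $\beta$-dependence is tracked. On the rigidity event (probability $\geq1-N^{-K}$ for any $K$, from the local semicircle law and $\P(\lambda_1\geq2+s)\leq e^{-cNs^{3/2}}$), the square-root edge behaviour $m_{sc}(2+\delta)=1-\sqrt\delta+O(\delta)$ forces $\hat z-2\asymp(1-\beta)^2$, hence $\hat z-\lambda_1\asymp(1-\beta)^2$ too, and $G''(\hat z)=-m_N'(\hat z)\asymp(1-\beta)^{-1}$. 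Here $\tau>0$ is exactly what makes $\hat z-\lambda_1\asymp(1-\beta)^2=c^2N^{-2/3+2\tau}$ dominate the $N^{-2/3}$ fluctuation scale of $\lambda_1$, so the saddle stays separated from the spectrum; $\tau<1/3$ keeps $\beta_N<1$. The cubic and higher Taylor corrections at the saddle scale $\ell_N:=(N|G''(\hat z)|)^{-1/2}$ contribute only a negative power of $N$, the contour tails are super-exponentially small, and $D_N$ — which is $\exp(O(N^{-c(\tau)}\mathrm{poly}(t)))$ and, because its linear-in-$u$ variation averages to zero on the contour, affects the integral only at order $\ell_N^2$ — may be frozen at the saddle (its displacement of the true saddle is lower order still). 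Dividing, $\langle e^{tR_{12}}\rangle_{\beta,N}=D_N(\hat z,\hat z;t)(1+O(N^{-c(\tau)}))$, uniformly for $|t|$ up to a fixed power of $N$.

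For \emph{Step 3}, $\log D_N(\hat z,\hat z;t)=\tfrac{t^2}2 V_N+O(t^4W_N)$ with $V_N:=\tfrac1{N^2}\sum_i(\hat z-\lambda_i)^{-2}$, $W_N:=\tfrac1{N^4}\sum_i(\hat z-\lambda_i)^{-4}$; the saddle equation gives $\sqrt{\hat z-2}\asymp1-\beta$, the local law gives $\tfrac1N\sum_i(\hat z-\lambda_i)^{-2}=-m_N'(\hat z)=-m_{sc}'(\hat z)+O(N^{-c(\tau)})$, and $-m_{sc}'(2+\delta)\sim\tfrac1{2\sqrt\delta}$, so $V_N\asymp\tfrac1{N(1-\beta)}$ (with explicit constant) and $W_N\asymp\tfrac1{N^3(1-\beta)^5}$. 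The normalization of the overlap in the statement amounts to replacing $t$ here by $\kappa_N t$ with $\kappa_N:=(2/V_N)^{1/2}\asymp\sqrt{N(1-\beta)}$, which makes the limiting variance equal $2$; then $\tfrac12(\kappa_N t)^2V_N=t^2$ and $(\kappa_N t)^4W_N\asymp t^4/(N(1-\beta)^3)=O(N^{-3\tau})$, so $\log D_N(\hat z,\hat z;\kappa_N t)=t^2+O(N^{-c(\tau)})$, i.e.\ $\langle e^{tR_{12}}\rangle_{\beta,N}=e^{t^2}+O(N^{-c(\tau)})$ on the rigidity event, with $c(\tau)>0$. Finally (Step 4), this holds for each fixed $t$ with probability $\geq1-N^{-K}$, so Borel--Cantelli gives a.s.\ convergence for each fixed $t$; uniformity in $t$ over a fixed neighbourhood of $0$ upgrades a.s.\ convergence on a countable dense set to a.s.\ convergence of the moment generating function near $0$, which forces convergence of all moments of $R_{12}$ to those of $\mathcal N(0,2)$, a moment-determinate law. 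Hence $R_{12}\todist\mathcal N(0,2)$ under $\langle\cdot\rangle_{\beta_N,N}$ for a.e.\ realization of $(g_{ij})$.

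The main obstacle will be \emph{Step 2}: carrying out the steepest descent with all errors controlled \emph{uniformly} as $\beta_N\uparrow1$, i.e.\ as the saddle $\hat z$ collapses onto the spectral edge, so that quantitative eigenvalue rigidity is strong enough both for the contour-tail and Taylor-remainder bounds and for freezing $D_N$ at the saddle; the hypothesis $\tau>0$ is precisely the margin this requires, and the rate $c(\tau)$ degrades as $\tau\to0$. Granted that, Step 1 and the expansion and evaluation of $D_N$ in Step 3 are essentially routine manipulations with the local semicircle law.
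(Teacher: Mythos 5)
Your proposal follows essentially the same route as the paper: the identical double-contour representation with the product $\prod_i\bigl((z_1-\lambda_i)(z_2-\lambda_i)-t^2/(4\beta^2N^2)\bigr)^{-1/2}$ (Lemma \ref{lem: representation}), the same saddle-point location $\gamma-\lambda_1\asymp N^{-2/3+2\tau}$ with rigidity and the local law outside the spectrum, the same steepest-descent split into a central window and super-exponentially small tails, freezing the coupling factor at the saddle, and the same identification of the variance through $(1-\beta^2)m_{\mathrm{sc}}'$ at the approximate saddle $\hat\gamma$ solving $m_{\mathrm{sc}}=2\beta$, with the normalization $\sqrt{N(1-\beta_N^2)}\,R_{12}$ made explicit. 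The only caveats are cosmetic: your intermediate claim $m_N'(\hat z)=m_{\mathrm{sc}}'(\hat z)+O(N^{-c})$ should be stated (as the paper does) with the $(1-\beta^2)$ prefactor, since the absolute error can be a positive power of $N$ for small $\tau$, but your subsequent use multiplies by exactly that factor, so the argument is consistent with the paper's Lemma \ref{lem: variance}.
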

\paragraph{Remarks}
\begin{enumerate}
\item Note that the condition $\tau>0$ is optimal, since for $\tau=0$, one does not expect the overlap to be asymptotically Gaussian. See \cite[Theorem 11.7.2]{talagrand-book}, where this is proved for the SK model with Ising spins.
\item The input we need concerning the matrix of random couplings $g_{ij}$ is the regularity of the limiting spectral distribution, and a strong version of the local law. It follows that the disorder can be replaced by a large class of distributions with minimal changes. We do not comment on this further. This point is explained in detail in \cite[Section 2]{baiklee}, for example.
\end{enumerate}

\subsection{Discussion}
For the SK model with Ising ($\pm 1$) spins, M. Talagrand \cite[Theorem 11.7.1]{talagrand-book} proved that the overlaps for several replicas are asymptotically jointly Gaussian under the ``annealed'' measure $\mathbf{E}\langle \cdot \rangle_{\beta,N}$ whenever $N^{1/3}(1-\beta_N^2)\rightarrow \infty$. Here $\mathbf{E}$ denotes expectation over the disorder variables. He also showed that the limiting distribution is not Gaussian if $N^{1/3}(1-\beta_N^2)\rightarrow c>0$. The proof used moment computations using the cavity method.

It is natural to ask whether the representation \eqref{eqn: double-contour}, can be used to study the distribution of the overlap in the low temperature phase. It is known \cite{talagrand-panchenko} that in this phase, the overlap concentrates about the values $\pm q$, where $q=1-1/\beta$. The behavior of the partition function was the subject of \cite{baiklee} and the limiting distribution was remarkably found to be given by the Tracy-Widom distribution when $\beta>1$. One can then ask what the typical size of fluctuations of $R_{12}^2$ are about the limiting value. This is the subject of forthcoming work.

As indicated in \cite{baiklee} and subsequent papers, it would be of great interest to investigate the transition region $|\beta-1|\ll N^{-1/3}$. The main difficulty here, for both the overlap and the partition function, is the presence of a branch point very near the random saddle point, with the distance between this singularity and the saddle point being far smaller than the order of the fluctuations of the eigenvalues near the edge.

\subsection{Outline of the proof}
In this section, we outline the proof of Theorem \ref{thm: clt}. The rest of the paper contains the derivation of the auxiliary results used here.

By Lemma \ref{lem: representation}, we have the representation
\begin{equation}\label{eqn: double-contour}
  \langle e^{tR_{12}} \rangle = \frac{\int_{\gamma-i\infty}^{\gamma+i\infty} \int_{\gamma-i\infty}^{\gamma+i\infty}e^{\frac{N}{2}\tilde{G}(z,w)}\mathrm{d}z \mathrm{d}w}{\Big (\int_{\gamma-i\infty}^{\gamma+i\infty} e^{\frac{N}{2}G(z)} dz\Big )^2},
  \end{equation}
  where $G(z,w)$ and $\tilde{G}(z,w)=\tilde{G}(z,w,t)$ are defined in \eqref{eqn: Gtilde} and \eqref{eqn: Gdef}, respectively. The contour integral appearing in the denominator is the partition function of the model. It was computed by Baik and Lee \cite{baiklee} for $\beta<1$ fixed, via a steepest descent approximation. The essential point in \cite{baiklee} is that the saddle point equation
  \begin{equation}\label{eqn: saddlepoint}
    G'(z)=0
  \end{equation}
  can be solved to high accuracy, because it involves random matrix quantities. In particular, when $\beta<1$, the solution $\gamma$ of \eqref{eqn: saddlepoint} with greatest real part is well approximated by the solution $\hat{\gamma}$ of
  \[m_\mathrm{sc}(z)=2\beta.\]
This remains true for $\beta_N=1-cN^{-1/3+\tau}$. In this case, $\gamma$ is now found with high probability at distance $N^{-2/3+2\tau^-}$ from the edge of the semi-circle and the singularities of $G(z)$. See Proposition \ref{lem: gamma-dist}. In particular, the derivatives of $G$ which appear in the saddle point approximation are now large, which complicates the analysis.

Having determined the approximate location of the saddle point and its distance to the singularities, we can apply the steepest descent approximation. The denominator in \eqref{eqn: double-contour} is computed in Proposition \ref{prop: taylor}:
\[\int_{\gamma-i\infty}^{\gamma+i\infty} e^{\frac{N}{2}G(z)} dz=i\sqrt{\frac{4\pi}{N G''(\gamma)}}(1+O(N^{-c})).\]
To compute the numerator, we we expand the function $\tilde{G}(w,z)$ as follows:
\[\tilde{G}(w,z)=G(z)+G(w)+\frac{t^2(1-\beta^2)}{4\beta^2N^2}\frac{m_N(w)-m_N(z)}{w-z}+\text{(lower order)}.\]
See Proposition \ref{prop: Gt-exp}. We then compute the double contour integral in the numerator of \eqref{eqn: double-contour} by successive applications of the steepest descent approximation around $z=w=\gamma$. 

It then remains only to compute the quantity $m_{\mathrm{sc}}(\gamma)$ to identify the limiting variance. This is done in Lemma \ref{lem: variance}.

\section{Notation and auxiliary results from random matrix theory}
Throughout, we denote by $\mathbf{P}$ the joint distribution of the disorder variables $g_{ij}$, $1\le i,j\le N$. We say that a sequence of events $A_N$ occurs with \emph{overwhelming probability} if, for any $D>0$, $\mathbf{P}(A_N^c)\le N^{-D}$ for all sufficiently large $N$.

The eigenvalues of the matrix $(M_{ij})=(1/2)(g_{ij}+g_{ji})$ are real and almost surely distinct. We label them in decreasing order as $\lambda_1\ge \lambda_2\ge ...\ge \lambda_N$. The key input from random matrix theory we will require is the \emph{local semi-circle law}, describing the behavior of the resolvent matrix $(M-z)^{-1}$. It is a classical fact, originally due to E. Wigner, that the empirical distribution of the eigenvalues of $M$ converges weakly to the semi-circle distribution. With our scaling, that distribution has density
\[\rho_{\mathrm{sc}}(x)=\frac{2}{\pi}\sqrt{1-x^2}.\]
The local semi-circle law considerably strengthens this statement, giving a sharp estimate for the distance between the trace of the resolvent matrix
\begin{equation}\label{eqn: mN-def}
m_N(z)=\frac{1}{N}\sum_{j=1}^N \frac{1}{\lambda_j-z}=\frac{1}{N}\mathrm{tr}(M-z)^{-1}
\end{equation}
and the corresponding quantity for the semi-circle distribution:
\[m_{\mathrm{sc}}(z)=\int \frac{1}{x-z}\rho_{\mathrm{sc}}(x)\,\mathrm{d}x.\]
The statement below is adapted from \cite{knowlesBG}:
\begin{theorem} \label{thm: sc-law} Fix $\delta>0$ and define the domain
\[S_N(\delta)=\{E+i\eta: |E|\le \delta^{-1}, N^{-1+\delta}\le \eta\le \delta^{-1}\}.\]
Then for any $\epsilon>0$, with overwhelming probability, we have
\[|m_N(z)-m_{\mathrm{sc}}(z)|\le N^{\epsilon}(N\eta)^{-1}\]
uniformly in $z$.
\end{theorem}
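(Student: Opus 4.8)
\section*{Proof proposal for Theorem~\ref{thm: sc-law}}

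The statement is the strong local semicircle law for the GOE-type matrix $M$, and I would simply follow the self-consistent-equation / fluctuation-averaging method of Erd\H os--Yau--Yin, in the streamlined form of \cite{knowlesBG}, recalling only the skeleton. Write $G=G(z)=(M-z)^{-1}$ for the resolvent, so that $m_N(z)=\tfrac1N\sum_i G_{ii}(z)$, and recall that $m_{\mathrm{sc}}$ is the unique solution with $\im m_{\mathrm{sc}}>0$ (for $\im z>0$) of the self-consistent equation $\tfrac14 m_{\mathrm{sc}}^2+z\,m_{\mathrm{sc}}+1=0$ --- the quadratic satisfied by the Stieltjes transform of $\rho_{\mathrm{sc}}$ --- whose left side has derivative $\sqrt{z^2-1}$ at $m=m_{\mathrm{sc}}$, of modulus $\asymp\sqrt{\kappa+\eta}$ with $\kappa:=\dist(\re z,\{-1,1\})$ and $\eta:=\im z$. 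It is convenient to prove the stronger entrywise law and to deduce the averaged one from it: with $\Lambda_o:=\max_{i\ne j}|G_{ij}|+\max_i|G_{ii}-m_{\mathrm{sc}}|$, $\Lambda:=|m_N-m_{\mathrm{sc}}|$ and $\Psi:=\sqrt{\im m_{\mathrm{sc}}/(N\eta)}+1/(N\eta)$, the targets are $\Lambda_o\prec\Psi$ and $\Lambda\prec 1/(N\eta)$, where $X\prec Y$ abbreviates ``$X\le N^{\epsilon}Y$ with overwhelming probability, for every $\epsilon>0$''; over Gaussian disorder the probabilistic inputs below are automatic.

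\emph{Step 1: the self-consistent equation and a weak law.} By the Schur complement formula, $1/G_{ii}=M_{ii}-z-\sum_{k,l\ne i}M_{ik}M_{li}\,G^{(i)}_{kl}$, where $G^{(i)}$ is the resolvent of the minor of $M$ with row and column $i$ deleted; since $G^{(i)}$ is independent of row $i$, the conditional expectation of the quadratic form given $G^{(i)}$ equals $\tfrac14\cdot\tfrac1N\tr G^{(i)}$, the factor $\tfrac14$ being the variance of $\rho_{\mathrm{sc}}$, and $\tfrac1N\tr G^{(i)}=m_N+O(1/(N\eta))$ by eigenvalue interlacing, so that $1/G_{ii}=-z-\tfrac14 m_N+\Upsilon_i+O(1/(N\eta))$ with $\Upsilon_i$ a centered quadratic form in row $i$. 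The large-deviation bound for quadratic forms in independent sub-Gaussian variables, combined with the Ward identity $\sum_{k,l}|G_{kl}|^2=\eta^{-1}\im\tr G$, controls the fluctuation by $|\Upsilon_i|\prec\bigl(\tfrac1{N^2}\sum_{k,l\ne i}|G^{(i)}_{kl}|^2\bigr)^{1/2}\prec\sqrt{\im m_N/(N\eta)}$, and analogous expansions control the off-diagonal entries $G_{ij}$. Expanding $G_{ii}$ and averaging over $i$ gives $\tfrac14 m_N^2+z m_N+1=[\mathrm{err}]$ with $|[\mathrm{err}]|\prec\bigl|\tfrac1N\sum_i\Upsilon_i\bigr|+\Lambda_o^2+1/(N\eta)$, and the (degenerate) stability of the quadratic --- $\Lambda\lesssim|[\mathrm{err}]|/\sqrt{\kappa+\eta}$ when $\Lambda\ll\sqrt{\kappa+\eta}$, and $\Lambda\lesssim|[\mathrm{err}]|^{1/2}$ otherwise --- then yields, via a crude first pass (bounding $\tfrac1N\sum_i\Upsilon_i$ by $\max_i|\Upsilon_i|$), a weak law $\Lambda+\Lambda_o\prec(N\eta)^{-1/4}$ uniformly on $S_N(\delta)$.

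\emph{Step 2: bootstrap to the entrywise strong law, then fluctuation averaging.} Feeding the a priori bound of Step~1 back into the stability estimate and running a continuity argument in $\eta$ --- at $\eta\asymp\delta^{-1}$ everything is trivial since $\norm{G}\le\eta^{-1}$; one then decreases $\eta$ in small multiplicative steps, at each scale using the bound from the previous scale (and the $\eta^{-2}$-Lipschitz continuity in $z$) to certify that $\Lambda$ lies in the stable regime, and using $\im m_N\le\im m_{\mathrm{sc}}+\Lambda$ together with $\im m_{\mathrm{sc}}\asymp\sqrt{\kappa+\eta}$ in the bulk and $\im m_{\mathrm{sc}}\asymp\eta/\sqrt{\kappa+\eta}$ outside it --- upgrades the weak law to the entrywise strong law $\Lambda_o\prec\Psi$. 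To obtain the optimal rate $\Lambda\prec 1/(N\eta)$ for $m_N$ one cannot estimate $\tfrac1N\sum_i\Upsilon_i$ by $\max_i|\Upsilon_i|$: the extra ingredient is the \emph{fluctuation-averaging lemma}, a high-moment computation exploiting cancellations among the weakly dependent $\Upsilon_i$ to show $\bigl|\tfrac1N\sum_i\Upsilon_i\bigr|\prec\Psi^2$. Inserting this together with $\Lambda_o\prec\Psi$ into the averaged self-consistent equation and once more invoking stability gives $\Lambda\prec 1/(N\eta)$. Finally, to upgrade ``for each fixed $z$'' to ``uniformly in $z$'', one proves the bound on an $N^{-4}$-net of $S_N(\delta)$, takes a union bound over its polynomially many points, and extends to all $z$ using that $m_N$ and $m_{\mathrm{sc}}$ are Lipschitz on $S_N(\delta)$ with constant $\le\eta^{-2}\le N^2$.

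The two substantive obstacles are the fluctuation-averaging lemma --- the combinatorial/probabilistic heart of the argument, where the gain from averaging must be extracted from a self-improving moment bound --- and the stability analysis near the spectral edge $\kappa+\eta\to 0$, where the self-consistent equation degenerates and one must track precisely the feedback between $\Lambda$, $\im m_N$ and $\kappa+\eta$ through the bootstrap; this is the usual reason the edge case of the local law is harder than the bulk, and it is also exactly why the saddle point in Theorem~\ref{thm: clt}, which sits at distance only $N^{-2/3+2\tau^-}$ from the edge, makes the main argument delicate. For $z$ bounded away from $\{-1,1\}$ the whole argument is routine, and the rate $(N\eta)^{-1}$ is already optimal there.
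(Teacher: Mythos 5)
The paper does not prove Theorem~\ref{thm: sc-law} at all: it is imported verbatim (``adapted from \cite{knowlesBG}''), and your outline --- Schur complement and self-consistent equation $\tfrac14 m^2+zm+1=0$ (correctly normalized for the semicircle on $[-1,1]$), weak law, continuity/bootstrap in $\eta$, fluctuation averaging to reach the rate $(N\eta)^{-1}$, and a net plus Lipschitz continuity for uniformity --- is exactly the Erd\H{o}s--Yau--Yin argument as presented in that reference. So your proposal is a faithful sketch of the proof the paper relies on, with the two genuinely hard ingredients (the fluctuation-averaging lemma and the edge stability analysis) correctly identified, though of course only named rather than carried out.
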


A consequence of the local semi-circle law is the following \emph{eigenvalue rigidity} statement.
\begin{proposition}
For $1\le i \le N$, define $\gamma_i$, the \emph{typical location} of $\lambda_i$, by
\[N\int_{\gamma_i}^1\rho_{\mathrm{sc}}(x)\,\mathrm{d}x=i-\frac{1}{2}.\]

With overwhelming probability, for any $\epsilon>0$, we have
\begin{equation}\label{eqn: rigidity}
|\lambda_i-\gamma_i|\le N^{-2/3+\epsilon}(i\wedge (N+1-i))^{-1/3},
\end{equation}
for $i=1,\ldots, N$.

In particular, we have the following bound for the empirical density, the number of eigenvalues in any interval of size $\gg N^{-1}$: for any $\epsilon>0$, with overwhelming probability, we have
\begin{equation}\label{eqn: density}
\frac{1}{N}\#\{i:\lambda_i\in I\} = \int_I \rho_{\mathrm{sc}}(x)\,\mathrm{d}x+O(N^{-1+\epsilon})
\end{equation}
uniformly for all intervals $I\subset \mathbb{R}$.
\end{proposition}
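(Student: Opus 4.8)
The plan is to derive both displays from a single high-probability bound on the empirical counting function, obtained by integrating the local law of Theorem~\ref{thm: sc-law} against a suitable kernel. Write $\mathcal{N}(E) = \#\{i : \lambda_i \le E\}$ for the eigenvalue counting function and $\mathcal{N}_{\mathrm{sc}}(E) = N\int_{-\infty}^{E}\rho_{\mathrm{sc}}(x)\,\mathrm{d}x$ for its deterministic counterpart. The heart of the matter is the estimate
\begin{equation}\label{eqn: counting}
\sup_{E\in\R}\, \bigl|\mathcal{N}(E)-\mathcal{N}_{\mathrm{sc}}(E)\bigr| \le N^{\epsilon},
\end{equation}
valid with overwhelming probability for every $\epsilon>0$. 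Granting \eqref{eqn: counting}, the density bound \eqref{eqn: density} follows by writing $\#\{i:\lambda_i\in I\}$ as a difference $\mathcal{N}(b)-\mathcal{N}(a)$ for $I=[a,b]$ and recalling that, with overwhelming probability, all eigenvalues lie in $[-1-N^{-2/3+\epsilon},\,1+N^{-2/3+\epsilon}]$, which handles unbounded intervals. For the rigidity bound \eqref{eqn: rigidity}, evaluate \eqref{eqn: counting} at $E=\lambda_i$: since $\mathcal{N}(\lambda_i)=N+1-i$ while $\mathcal{N}_{\mathrm{sc}}(\gamma_i)=N+\tfrac12-i$ by the definition of $\gamma_i$, we obtain $\bigl|\mathcal{N}_{\mathrm{sc}}(\lambda_i)-\mathcal{N}_{\mathrm{sc}}(\gamma_i)\bigr|\le 2N^{\epsilon}$, i.e. $N\bigl|\int_{\gamma_i}^{\lambda_i}\rho_{\mathrm{sc}}\bigr|\le 2N^{\epsilon}$. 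It remains to invert this using the square-root vanishing of $\rho_{\mathrm{sc}}$ at $\pm1$: for $i\le N/2$ one has $1-\gamma_i\asymp(i/N)^{2/3}$ and $\rho_{\mathrm{sc}}(x)\asymp(i/N)^{1/3}$ on a ball around $\gamma_i$ of radius $\asymp(i/N)^{2/3}$, whence $N\int_{\gamma_i}^{\gamma_i+t}\rho_{\mathrm{sc}}\asymp Nt\,(i/N)^{1/3}$ for $|t|$ up to that radius; forcing this to be $\lesssim N^{\epsilon}$ gives $|\lambda_i-\gamma_i|\lesssim N^{-2/3+\epsilon}i^{-1/3}$, and symmetrically near the left edge with $i$ replaced by $N+1-i$ (the $O(N^{\epsilon})$ extreme indices at each edge are absorbed into the exponent by using the edge bound $|\lambda_1-1|\le N^{-2/3+\epsilon}$ directly).

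To establish \eqref{eqn: counting} I would use the standard Helffer--Sjöstrand argument. Approximate $\mathbf{1}_{(-\infty,E]}$ by a smooth function $f$ regularized on scale $\eta_0=N^{-1+\delta}$, take an almost-analytic extension $\tilde f$, and express $\int f\,\mathrm{d}(\mu_N-\mu_{\mathrm{sc}})$, with $\mu_N$ the empirical spectral measure, as a two-dimensional integral of $(\bar\partial \tilde f)(x+\ii\eta)$ against $m_N(x+\ii\eta)-m_{\mathrm{sc}}(x+\ii\eta)$. For $\eta\ge\eta_0$ the integration domain lies in $S_N(\delta)$, where Theorem~\ref{thm: sc-law} gives $|m_N-m_{\mathrm{sc}}|\le N^{\epsilon}/(N\eta)$; integrating against $|\bar\partial\tilde f|$ over $\eta_0\le\eta\le C$ produces $O(N^{\epsilon})$ after absorbing a logarithm. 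For $\eta<\eta_0$ one exploits the monotonicity of $\eta\mapsto\eta\,\mathrm{Im}\,m_N(x+\ii\eta)$ together with the bound on $\mathrm{Im}\,m_N(x+\ii\eta_0)$ from the local law. In the bulk, $|E|\le 1-\kappa$ with $\kappa$ fixed, this already yields \eqref{eqn: counting} (indeed $N^{-1+\epsilon}$ rigidity) from Theorem~\ref{thm: sc-law} as stated; the details are in \cite[\S 11]{knowlesBG}.

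The main obstacle is the edge. The bound $N^{\epsilon}/(N\eta)$ is too crude near $E=\pm1$: at $E=1$, $\eta=\eta_0$ it does not even dominate $\mathrm{Im}\,m_{\mathrm{sc}}$, so on its own it cannot exclude eigenvalues at distance $\gg N^{-2/3+\epsilon}$ from the edge, which is exactly what \eqref{eqn: rigidity} requires when $i\wedge(N+1-i)=O(1)$. Here one must invoke the square-root-improved (``strong'') local semicircle law of \cite{knowlesBG} --- also the input referred to in Remark~2 --- of the form
\[|m_N(z)-m_{\mathrm{sc}}(z)|\le N^{\epsilon}\Bigl(\tfrac{1}{N(\kappa+\eta)}+\tfrac{1}{N\eta}\min\{(\kappa+\eta)^{-1/2},1\}\Bigr),\qquad \kappa=\dist(\mathrm{Re}\,z,\{\pm1\}).\]
The extra factor $(\kappa+\eta)^{-1/2}$ compensates exactly for the vanishing of $\rho_{\mathrm{sc}}$ --- near $\lambda_i$ the relevant $\kappa$ is $\asymp(i/N)^{2/3}$ --- so feeding this improved bound into the same Helffer--Sjöstrand estimate extends \eqref{eqn: counting} uniformly up to and slightly past the spectral edge, and in particular gives $\lambda_1\le 1+N^{-2/3+\epsilon}$. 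With \eqref{eqn: counting} in hand, the remainder is the elementary inversion carried out in the first paragraph.
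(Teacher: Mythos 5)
The paper itself offers no proof of this proposition: it is stated as a known consequence of the local law and imported from the cited lecture notes of Knowles and Benaych-Georges. Your route --- a Helffer--Sj\"ostrand bound on the counting function $\sup_E|\mathcal{N}(E)-\mathcal{N}_{\mathrm{sc}}(E)|\le N^\epsilon$, followed by inversion of $\mathcal{N}_{\mathrm{sc}}$ using the square-root vanishing of $\rho_{\mathrm{sc}}$ at $\pm1$ --- is exactly the standard argument from that source, and the bulk part of your sketch, as well as the deduction of \eqref{eqn: density} and of \eqref{eqn: rigidity} from the counting bound, is sound.

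There is, however, a genuine gap at the edge, which is the only delicate part and the part the rest of the paper actually needs. First, the ``square-root-improved'' law you invoke, with error $N^{\epsilon}\bigl(\tfrac{1}{N(\kappa+\eta)}+\tfrac{1}{N\eta}\min\{(\kappa+\eta)^{-1/2},1\}\bigr)$, is vacuous near the edge: for $\kappa+\eta\le 1$ the minimum equals $1$... rather, $(\kappa+\eta)^{-1/2}\ge 1$ there, so your bound reduces to the weak bound $N^{\epsilon}/(N\eta)$ and cannot rule out an eigenvalue at distance $N^{-2/3+\epsilon}$ beyond the edge (such an eigenvalue perturbs $\operatorname{Im}m_N$ at scale $\eta\asymp N^{-2/3}$ by $\asymp 1/(N\eta)$, which your bound does not beat). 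The input actually required is the law outside the spectrum that the paper records as Theorem~\ref{thm: outside-sc}, i.e.\ \eqref{eqn: sc_law}, with error $\asymp N^{\delta-1}/(\kappa+\eta)$ near the edge (in the literature, $\tfrac{1}{N(\kappa+\eta)}+\tfrac{1}{(N\eta)^2\sqrt{\kappa+\eta}}$), which at $\kappa\asymp N^{-2/3+\epsilon}$, $\eta\asymp N^{-2/3}$ is $o(1/(N\eta))$ and hence does exclude outliers. Second, your claim that the counting-function estimate ``in particular gives $\lambda_1\le 1+N^{-2/3+\epsilon}$'' is a non sequitur: an error of size $N^{\epsilon}$ in $\mathcal{N}(E)-\mathcal{N}_{\mathrm{sc}}(E)$ is perfectly compatible with $N^{\epsilon}$ stray eigenvalues far to the right of the edge. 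The no-outliers statement must be established separately (by the standard comparison of $\operatorname{Im}m_N$ with $\operatorname{Im}m_{\mathrm{sc}}$ at scale $\eta\asymp N^{-2/3}$, or on the real axis, using Theorem~\ref{thm: outside-sc}), and you in fact already assume it in your first paragraph when treating unbounded intervals and the extreme indices $i\wedge(N+1-i)=O(N^{\epsilon})$. Both defects are repairable by standard arguments, but as written the edge case of \eqref{eqn: rigidity} --- precisely the regime used in Lemma~\ref{lem: gamma-dist} --- is not proved.
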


\section{A representation formula}
In this section, we give an explicit contour integral representation for the the overlap $R_{12}$. 
\subsection{Laplace transform of the overlap}
Define the matrix 
\[M_{ij}=\frac{g_{ij}+g_{ji}}{2}, \quad 1\le i,j\le N.\]
The matrix $M$ is a symmetric matrix with mean-zero Gaussian, with variance 1/2 off the diagonal and variance 1 on the diagonal. The entries are independent for the symmetry constraint $M_{ij}=M_{ji}$. $M$ is thus a rescaling (by $1/\sqrt{2}$) of the classical GOE (Gaussian Orthogonal Ensemble) matrix. In particular, $M$ is diagonalizable, with real eigenvalues. 

In terms of $M$, the Laplace transform of the overlap distribution, introduced in \eqref{computation:Laplace_R} is expressed as follows:
\begin{align}\label{for:laplace}
\langle e^{tR_{12}} \rangle & = \frac{1}{Z_N^2}\frac{1}{ |\mathbb{S}^{N-1}|^2} \int_{(\mathbb{S}^{N-1})^2} \exp\bigg(\beta \langle \mathbf{x}, M\mathbf{x}\rangle+\beta \langle \textbf{y}, M\mathbf{y}\rangle +\frac{t}{N}\langle \mathbf{x},\textbf{y} \rangle \bigg) \mathrm{d}\omega(\textbf{x}) \mathrm{d} \omega(\mathbf{y}).
\end{align}

\begin{lemma}\label{lem: representation} Let $\beta>0$. The Laplace transform of the overlap, $R_{12}$, has the following contour integral representation:
\begin{equation}\label{eqn: josephus}
\langle e^{tR_{12}} \rangle = \frac{\int_{\gamma-i\infty}^{\gamma+i\infty} \int_{\gamma-i\infty}^{\gamma+i\infty}e^{\frac{N}{2}\tilde{G}(z,w)}\mathrm{d}z \mathrm{d}w}{\Big (\int_{\gamma-i\infty}^{\gamma+i\infty} e^{\frac{N}{2}G(z)} dz\Big )^2},
\end{equation}
where 
\begin{equation}\label{eqn: Gdef}
G(z)= 2\beta z - \frac{1}{N}\sum_i \log (z-\lambda_i),
\end{equation}
and 
\begin{equation}\label{eqn: Gtilde}
\tilde{G}(z,w,t)= 2\beta (z+w) - \frac{1}{N}\sum_i \log \Big( (z-\lambda_i)(w-\lambda_i)- \frac{t^2}{4\beta^2 N^2}\Big ).
\end{equation}
\end{lemma}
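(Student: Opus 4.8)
The plan is to reduce the spherical integrals in \eqref{for:laplace} to explicit Gaussian integrals, following the method Baik and Lee use for the partition function but carried out on the two replicas at once. The constraints $|\mathbf x|^2 = N$ and $|\mathbf y|^2 = N$ are encoded by an inverse Laplace transform in the squared radii, which replaces each spherical integral by a Gaussian integral over $\mathbb R^N$ (and the product of the two spheres by a Gaussian integral over $\mathbb R^{2N}$) that evaluates in closed form. The underlying identity is this: for a real symmetric $A$ on $\mathbb R^d$, write $I_A(u) = \int_{\{|\xi|^2 = u\}} e^{\frac12\langle\xi, A\xi\rangle}\,\mathrm d\omega(\xi)$ for the integral over the sphere of radius $\sqrt u$; passing to polar coordinates and using the Gaussian formula $\int_{\mathbb R^d} e^{\frac12\langle\xi, (A - zI)\xi\rangle}\,\mathrm d\xi = (2\pi)^{d/2}\det(zI - A)^{-1/2}$, valid for $z$ real and larger than the top eigenvalue of $A$, one recognizes $u \mapsto I_A(u)/(2\sqrt u)$ as a function whose Laplace transform in $z/2$ equals $(2\pi)^{d/2}\det(zI - A)^{-1/2}$. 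This decays like $|\mathrm{Im}\,z|^{-d/2}$ along vertical lines, so Bromwich inversion applies for $d\geq 3$ and gives, for any $\gamma$ to the right of the spectrum of $A$,
\[
  I_A(N) = \frac{2\sqrt N\,(2\pi)^{d/2}}{4\pi i}\int_{\gamma - i\infty}^{\gamma + i\infty} e^{\frac{Nz}{2}}\det(zI - A)^{-1/2}\,\mathrm d z ,
\]
and the same computation with two independent radial variables applies to a product of two spheres.

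I would then apply this twice. For the denominator, take $d = N$ and $A = 2\beta M$: then $\det(zI - 2\beta M)^{-1/2} = \prod_i (z - 2\beta\lambda_i)^{-1/2}$, and the substitution $z \mapsto 2\beta z$ turns the integrand into $(2\beta)^{-N/2}\,2\beta\, e^{\frac N2 G(z)}$, with $G$ as in \eqref{eqn: Gdef}, moving the contour to $\mathrm{Re}\,z = \gamma > \lambda_1$. For the numerator, the exponent in \eqref{for:laplace} is the quadratic form $\frac12\langle(\mathbf x, \mathbf y), \mathcal Q(\mathbf x, \mathbf y)\rangle$ attached to the $2N \times 2N$ block matrix
\[
  \mathcal Q = \begin{pmatrix} 2\beta M & \tfrac tN I \\[3pt] \tfrac tN I & 2\beta M \end{pmatrix},
\]
so the two-radial-variable version of the identity above produces the Gaussian factor $(2\pi)^N \det\bigl(\diag(zI, wI) - \mathcal Q\bigr)^{-1/2}$. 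Diagonalizing $M$ block-decomposes this determinant into the $2 \times 2$ determinants $(z - 2\beta\lambda_i)(w - 2\beta\lambda_i) - t^2/N^2$, one per eigenvalue, and after $z \mapsto 2\beta z$, $w \mapsto 2\beta w$ the integrand becomes $(2\beta)^{-N}\,(2\beta)^2\, e^{\frac N2 \tilde G(z, w, t)}$ with $\tilde G$ exactly \eqref{eqn: Gtilde}. Forming $\langle e^{tR_{12}}\rangle = I_{\mathcal Q}(N, N) / I_{2\beta M}(N)^2$, the sphere-area factors $|\mathbb S^{N-1}|^{\pm 2}$ from \eqref{for:laplace} and from the definition of $Z_N$ cancel, and so do all the remaining elementary constants — one checks that the prefactor of the double integral equals the square of the prefactor of the single integral (powers of $2\pi$, $2\sqrt N$, $4\pi i$, and $2\beta$ all match) — leaving precisely the representation \eqref{eqn: josephus}.

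Two comments. The value of \eqref{eqn: josephus} does not depend on $\gamma$ as long as both contours stay to the right of all branch points of the integrands; for $\tilde G$ these solve $(z - \lambda_i)(w - \lambda_i) = t^2/(4\beta^2 N^2)$ and lie within $O(N^{-2})$ of the $\lambda_i$, so any $\gamma > \lambda_1$ bounded away from $\lambda_1$ is admissible, which is all that the later saddle-point analysis needs. The one genuinely delicate step is to make the reduction rigorous rather than formal — the heuristic ``Fourier representation of the Dirac mass on the sphere'' is only a distributional identity — so I would run everything through the honest Laplace transform of the continuous, integrable radial profiles $u \mapsto I_{2\beta M}(u)$ and $(u, v) \mapsto I_{\mathcal Q}(u, v)$, verify the vertical-line decay of $\det(zI - A)^{-1/2}$ that justifies the Bromwich inversion, and invoke Fubini (all the relevant integrands are absolutely integrable once $\mathrm{Re}\,z$, $\mathrm{Re}\,w$ exceed $\lambda_1$) to interchange the radial and contour integrations. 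The definite sign of the $t^2/N^2$ perturbation in the $2 \times 2$ blocks causes no trouble and is exactly what produces the coupled logarithm appearing in $\tilde G$.
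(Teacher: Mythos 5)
Your proposal is correct and follows essentially the same route as the paper: both reduce the spherical integrals to Gaussian integrals over $\mathbb{R}^N$ and $\mathbb{R}^{2N}$ via a (two-variable) Laplace/Bromwich inversion in the squared radii, evaluate the Gaussian explicitly after diagonalizing $M$, and cancel all prefactors in the ratio with $Z_N^2$. The only differences are cosmetic: the paper quotes Baik--Lee's formula for the denominator and computes the coupled Gaussian by completing the square rather than through the block determinant of $\diag(zI,wI)-\mathcal{Q}$, which is the same calculation; your admissibility condition on $\gamma$ matches the paper's requirement $\gamma>\lambda_1+t/(2\beta N)$.
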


\begin{proof}
From \cite[Eqn. (4.7)]{baiklee}, we know that:
\begin{equation}
Z_N(\beta)= \frac{\Gamma(N/2)}{2\pi i(N\beta)^{N/2-1}} \int_{\gamma-i\infty}^{\gamma+i\infty} e^{\frac{N}{2}G(z)} dz,
\end{equation}
where $\gamma>\lambda_1$, and 
\begin{equation}
G(z)= 2\beta z - \frac{1}{N}\sum_{i=1}^N \log (z-\lambda_i).
\end{equation}
 Now we rewrite the integral (\ref{for:laplace}). Let $S^{N-1} = \{ \mathbf{x} \in \mathbb{R}^N: |\mathbf{x}| =1\}$, the unit sphere in $\mathbb{R}^N$. Let $\mathrm{d}\Omega$ be the surface area measure on $S^{N-1}$, then $\frac{\mathrm{d}\Omega}{\vert S^{N-1}\vert}$ is the uniform measure on $S^{N-1}$. After rescaling, the numerator in \eqref{for:laplace} is
 \[\frac{1}{\vert S^{N-1}\vert^2}\int_{( S^{N-1})^2} \exp\bigg(\beta N \langle \mathbf{x}, M\mathbf{x}\rangle+\beta N\langle \mathbf{y}, M\mathbf{y}\rangle +{t}\langle \textbf{x},\textbf{y} \rangle \bigg) \mathrm{d}\Omega(\mathbf{x}) \mathrm{d} \Omega(\mathbf{y}).\]
The matrix $M$ is diagonalized by an orthogonal transformation leaving $\mathrm{d}\Omega$ invariant, so the last quantity equals
\[\frac{1}{\vert S^{N-1}\vert^2}\int_{( S^{N-1})^2} \exp\bigg(\beta N \sum_{i=1}^N \lambda_i (x_i^2+y_i^2) +{t}\langle \textbf{x},\textbf{y} \rangle \bigg) \mathrm{d}\Omega(\mathbf{x}) \mathrm{d} \Omega(\mathbf{y}).\]

In order to compute this integral, we consider 
\begin{align}\label{def:J}
J(z,w)=\int_{\mathbb{R}^N}\int_{\mathbb{R}^N} e^{\beta N\sum_{i=1}^N (\lambda_i-z) x_i^2} e^{\beta N\sum_{i=1}^N (\lambda_i -w) y_i^2} e^{t\sum_{i=1}^N x_i y_i}\,\prod_{i=1}^N \mathrm{d}x_i\mathrm{d}y_i.
\end{align}
This integral is absolutely convergent if 
\[\Re z, \Re w > \lambda_1.\]
In this region, $J(z,w)$ defines an analytic function of $z$ and $w$.
We pass to polar coordinates: substitute $\mathbf{x}=r_1\mathbf{x}_1$ and $\mathbf{y}=s_1\mathbf{y}_1$ with $r_1,s_1>0$ and $|x_1|=|y_1|=1$, and then set $\beta Nr_1^2=r$, $\beta Ns_1^2=s$
to find that 
\begin{align*}
 J(z,w)=\frac{1}{4(\beta N)^N}\int_0^\infty \int_0^\infty e^{- z r}e^{- w s} I(r,s)s^{\frac{N}{2}-1}r^{\frac{N}{2}-1}\,\mathrm{d}r\mathrm{d}s,
\end{align*}
where
\[I(r,s)=\int_{S^{N-1}\times S^{N-1}} e^{ r\langle \mathbf{x}_1, M\mathbf{x}_1\rangle +s\langle \mathbf{y}_1, M\mathbf{y}_1\rangle}   e^{\frac{t\sqrt{rs}}{\beta N} \langle\mathbf{x}_1,\mathbf{y}_1\rangle }\,\mathrm{d}\Omega_1 \mathrm{d}\Omega_2.\]
Completing the square, we obtain
\begin{align*}
&\beta N(\lambda_i-z) x_i^2 + t x_iy_i\\ 
=&\beta N(\lambda_i-z) \left( x_i^2 + 2 x_i  \cdot \frac{t}{2\beta N} \frac{y_i}{\lambda_i - z}+\frac{t^2}{4\beta^2 N^2}\frac{y_i^2}{(\lambda_i-z)^2}\right)-\frac{t^2}{4\beta N}\frac{y_i^2}{\lambda_i-z}\\
=&\beta N(\lambda_i-z)\left(x_i+\frac{t}{2\beta N(\lambda_i-z)}y_i\right)^2-\frac{t^2}{4\beta N } \frac{y_i^2}{\lambda_i-z}.
\end{align*}
Shifting the $x_i$, the integral (\ref{def:J}) is given by
\begin{align}
&\int e^{\beta N\sum_{i=1}^N (\lambda_i-z) x_i^2} e^{\beta N\sum_{i=1}^N \left( \lambda_i -w -\frac{t^2}{4\beta^2 N^2}\frac{1}{\lambda_i-z}\right) y_i^2}\,\prod_{i=1}^N \mathrm{d}x_i\mathrm{d}y_i\\
=& \Big ( \frac{\pi}{\beta N}\Big )^N \left(\prod_{i=1}^N (z-\lambda_i)(w-\lambda_i+\frac{t^2}{4\beta^2 N^2}\frac{1}{\lambda_i-z})\right)^{-1/2}\\
=& \Big ( \frac{\pi}{\beta^2 N}\Big )^N  \big(\prod_{i=1}^N (w-\lambda_i)(z-\lambda_i)-\frac{t^2}{4\beta^2 N^2}\big)^{-1/2}.\label{for:laplace_transform}
\end{align}
$(\cdot )^{-1/2}$ is defined in terms of the principal branch of the logarithm.

Taking the inverse Laplace transform and using (\ref{for:laplace_transform}), we obtain that
\begin{align}
&\frac{I(r,s)s^{\frac{N}{2}-1}r^{\frac{N}{2}-1}}{4(\beta N)^N}\\
=&\frac{1}{(2\pi i)^2}\int_{\gamma-i\infty}^{\gamma+i\infty} \int_{\gamma-i\infty}^{\gamma+i\infty} e^{zr}e^{ws} J(z,w) \mathrm{d}z \mathrm{d}w\\
=&\Big ( \frac{\pi}{\beta N}\Big )^N \frac{1}{(2\pi i)^2}\int_{\gamma-i\infty}^{\gamma+i\infty} \int_{\gamma-i\infty}^{\gamma+i\infty} e^{zr}e^{ws} \big(\prod_{i=1}^N (w-\lambda_i)(z-\lambda_i)-\frac{t^2}{4\beta^2 N^2}\big)^{-1/2} \mathrm{d}z \mathrm{d}w,
\end{align}
where $\gamma$ is a real number satisfying $\gamma >\lambda_1+ \frac{t}{2\beta N}$. By letting $r=s=\beta N$, we obtain:
\begin{align*}
&\int_{(S^{N-1})^2} e^{ N\beta\langle \mathbf{x}_1, M\mathbf{x}_1\rangle +N\beta\langle \mathbf{y}_1, M\mathbf{y}_1\rangle}   e^{t \langle\textbf{x}_1,\textbf{y}_1\rangle }\,\mathrm{d}\Omega_1 \mathrm{d}\Omega_2 \\=&\frac{4\pi^N}{(\beta N)^{N-2}}\int_{\gamma-i\infty}^{\gamma+i\infty} \int_{\gamma-i\infty}^{\gamma+i\infty} e^{N\beta(z+w)}\big(\prod_{i=1}^N (w-\lambda_i)(z-\lambda_i)-\frac{t^2}{4\beta^2 N^2}\big)^{-1/2} \mathrm{d}z \mathrm{d}w
\end{align*}
\end{proof}

\section{Estimates for $G(z)$ and derivatives}\label{sec: gamma}
To perform a saddle point approximation of the integrals appearing in \eqref{eqn: josephus}, we estimate the quantity $G(z)$ \eqref{eqn: Gdef}
for $z$ close to the largest solution $\gamma=\gamma(\beta)\in \mathbb{R}$ of 
\begin{equation}\label{eqn: Gprime}
G'(z)=2\beta -\frac{1}{N}\sum_{i=1}^N \frac{1}{z-\lambda_i}=2\beta-m_N(z).
\end{equation}
We refer to $\gamma$ as as ``the'' saddle point. As $\beta$ increases from a fixed value less than the critical value $\beta_c=1$ to $\beta_c-N^{-1/3}$, the distance between $\gamma$ and the edge of the semi-circle distribution (1 in our scaling) decreases from order $1$ to order $N^{-2/3}$. 

We use a version of the semi-circle law outside the limiting spectrum. First, we introduce some notation:
\begin{align}
m_{\mathrm{sc}}(z)&= 2(-z+\sqrt{z^2-1}),\\
\kappa &= ||E|-1|. \nonumber
\end{align}
\begin{theorem}\label{thm: outside-sc}
Define the domain
\[S=\{E+i\eta: |E|\ge 1+N^{-2/3+\epsilon},\eta>0\}.\]
Then for any $\delta>0$,
\begin{equation}\label{eqn: sc_law}
\left|m_N(z)-m_{\mathrm{sc}}(z)\right|\le \frac{1}{N^{1-\delta}}\frac{1}{(\kappa+\eta)+(\kappa+\eta)^2}, \quad z\in S
\end{equation}
with overwhelming probability.
\end{theorem}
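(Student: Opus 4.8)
The plan is to reduce everything to eigenvalue rigidity through an integration by parts, bypassing any analysis of the self-consistent equation for $m_N$. By the evenness of $\rho_{\mathrm{sc}}$ we may assume $E\geq 1+N^{-2/3+\epsilon}$; the case $E\leq-(1+N^{-2/3+\epsilon})$ is identical with the role of $\lambda_1$ played by $\lambda_N$. Fix $\epsilon'=\tfrac13\min(\epsilon,\delta)$. The first step is to note, from the edge case of the rigidity bound \eqref{eqn: rigidity}, that with overwhelming probability $\lambda_1\leq 1+N^{-2/3+\epsilon'}$; since $\kappa\geq N^{-2/3+\epsilon}$, for $N$ large this forces $\lambda_1<E-\tfrac12\kappa$, so $z=E+i\eta$ lies strictly to the right of the spectrum, at distance $\geq\tfrac12\kappa$ from it, and both $m_N$ and $m_{\mathrm{sc}}$ are analytic there.

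Next I would pass to the cumulative distribution functions $F_N(x)=\tfrac1N\#\{i:\lambda_i\leq x\}$ and $F_{\mathrm{sc}}(x)=\int_{-\infty}^x\rho_{\mathrm{sc}}$, which agree outside a compact set. Integration by parts, the boundary terms vanishing since $F_N-F_{\mathrm{sc}}$ has compact support, gives the exact identity
\[
m_N(z)-m_{\mathrm{sc}}(z)=\int_{\R}\frac{F_N(x)-F_{\mathrm{sc}}(x)}{(x-z)^2}\,dx .
\]
The integrand is supported in $[\min(\lambda_N,-1),\max(\lambda_1,1)]$, which with overwhelming probability lies in $[-2,2]$, and the key quantitative input is the Kolmogorov-type bound $\sup_x|F_N(x)-F_{\mathrm{sc}}(x)|\leq N^{-1+\epsilon'}$ with overwhelming probability --- precisely \eqref{eqn: density}, applied with $I=[-3,x]$.

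It then remains to bound the deterministic integral. On the support we have $x\leq\lambda_1<E-\tfrac12\kappa$, hence $|x-z|^2=(E-x)^2+\eta^2\geq\tfrac14\kappa^2+\eta^2$; combining this with the fact that the support has length at most $4$ and the substitution $u=E-x$ yields
\[
\int\frac{dx}{|x-z|^2}\leq\min\!\Bigl(\int_{\kappa/2}^{\infty}\frac{du}{u^2+\eta^2},\ \frac{4}{\tfrac14\kappa^2+\eta^2}\Bigr)\leq\frac{C}{(\kappa+\eta)+(\kappa+\eta)^2},
\]
the first factor supplying the $\lesssim(\kappa+\eta)^{-1}$ behaviour relevant when $\kappa+\eta\lesssim 1$ and the second the $\lesssim(\kappa+\eta)^{-2}$ behaviour relevant when $\kappa+\eta\gtrsim 1$. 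Multiplying by the Kolmogorov bound and absorbing the constant and the $N^{\epsilon'}$ loss into $N^{\delta}$ gives the assertion uniformly on $S$.

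The only delicate point --- and essentially the only place the hypothesis $\kappa\geq N^{-2/3+\epsilon}$ is really used --- is the first step: one needs the random right edge $\lambda_1$ to sit comfortably to the left of $E$ on the scale $\kappa$, so that the $O(N^{-2/3+\epsilon'})$ edge fluctuation is negligible and $z$ stays off the spectrum on a scale $\asymp\kappa+\eta$. Everything afterwards is the elementary integral estimate above together with standard rigidity; in particular Theorem \ref{thm: sc-law} is not needed for this route, although the bulk law on a line $\eta\asymp 1$ combined with analyticity of $m_N-m_{\mathrm{sc}}$ off the spectrum and a contour/maximum-principle argument would give an alternative proof.
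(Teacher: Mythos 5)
Your argument is correct, but note that the paper does not prove Theorem \ref{thm: outside-sc} at all: it is quoted as an input ``adapted from \cite{knowlesBG}'', where the bound outside the spectrum is obtained from the self-consistent equation for $m_N$ together with fluctuation averaging. Your route is genuinely different and more elementary: on the overwhelming-probability event where rigidity \eqref{eqn: rigidity} and the counting estimate \eqref{eqn: density} hold, the identity $m_N(z)-m_{\mathrm{sc}}(z)=\int (F_N-F_{\mathrm{sc}})(x)(x-z)^{-2}\,\mathrm{d}x$, the Kolmogorov bound $\sup_x|F_N-F_{\mathrm{sc}}|\le N^{-1+\epsilon'}$, and the deterministic estimate $\int_{\mathrm{supp}}|x-z|^{-2}\mathrm{d}x\le C\min\bigl((\kappa+\eta)^{-1},(\kappa+\eta)^{-2}\bigr)$ do give exactly \eqref{eqn: sc_law}, uniformly on $S$ and with no union bound over $z$, since everything after the event is deterministic. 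What this buys is simplicity and automatic uniformity; what it costs is logical ordering: in \cite{knowlesBG} rigidity is itself deduced from the strong local law (including precisely the improved bound off the spectrum, which controls eigenvalues beyond the edge), so your derivation is not a self-contained replacement for the random-matrix input but a reduction of one quoted result to another. Within this paper, where \eqref{eqn: rigidity} and \eqref{eqn: density} are likewise taken as given, that is perfectly legitimate, though you should say so explicitly to avoid any appearance of circularity. Two small points: the support of $F_N-F_{\mathrm{sc}}$ extends to $\max(\lambda_1,1)$, not to $\lambda_1$ (if $\lambda_1<1$ the two distribution functions still differ on $(\lambda_1,1)$), so the correct statement is $x\le\max(\lambda_1,1)<E-\tfrac12\kappa$ on the support --- your displayed bound $|x-z|^2\ge\tfrac14\kappa^2+\eta^2$ is unaffected; and for the left edge $E\le-(1+N^{-2/3+\epsilon})$ you should appeal to the analogous estimate with $\lambda_N$ and the left endpoint of the support rather than to ``evenness'' of a single realization, as you indeed indicate.
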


Our initial step is to obtain an estimate for this distance, depending of $\beta_N$, which we assume is of the form
\[\beta_N=1-N^{-2/3+\tau}.\]

\begin{lemma}\label{lem: gamma-dist}
Let $\beta_N=1-cN^{-1/3+\tau}$, where $0<\tau<1/3$. With overwhelming probability, for any $\epsilon>0$, the unique solution $\gamma$ in $(\lambda_1,\infty)$ to
\[G'(z)=0\]
satisfies
\begin{equation}\label{eqn: gamma-dist}
  N^{-2/3+2\tau-\epsilon}<\gamma-\lambda_1 < N^{-2/3+2\tau+\epsilon}
\end{equation}
for $N\ge N_0(\epsilon)$.

\begin{proof}
For real $z$, the expression \eqref{eqn: Gprime} represents an increasing function on $(\lambda_1,\infty)$. As noted in \cite[Lemma 6.1]{baiklee}, we always have $G'(\lambda_1+\frac{1}{3\beta N})<0$, so
\[\gamma-\lambda_1> \frac{1}{3\beta N}.\]
Let $\eta>0$ be such that $\tau <\eta< 2\tau$, and set
\[x_N=\lambda_1+cN^{-2/3+\eta}.\]

By \eqref{eqn: sc_law}, for any $\delta>0$ and $N$ large enough, we have
\begin{equation}\label{eqn: Gprime-2}
\begin{split}
G'(\lambda_1+cN^{-2/3+\eta})= 2(1-cN^{-1/3+\tau})-2(x_N-\sqrt{x_N^2-1})+O(N^{-1/3+\delta}).
\end{split}
\end{equation}
We choose $\delta<\eta/2$.

By eigenvalue rigidity, we have
\begin{equation}\label{eqn: lambda1}
|1-\lambda_1|\le CN^{-2/3+\delta}
\end{equation}
with high probability, so
\begin{align*}
x_n-\sqrt{x_N^2-1}&=\lambda_1+cN^{-2/3+\eta}-\sqrt{(cN^{-2/3+\eta}+\lambda_1-1)(1+\lambda_1+cN^{-2/3+\eta})}\\
&=\lambda_1+cN^{-2/3+\eta}-(\sqrt{2}+O(N^{-2/3+\eta}))c^{1/2}N^{-1/3+\eta/2}\\
&= 1- \sqrt{2c}N^{-1/3+\eta/2}+cN^{-2/3+\eta} +O(N^{-2/3+\delta}).
\end{align*}
with $c'>0$. Putting this into \eqref{eqn: Gprime-2}, we obtain:
\[G'(\lambda_1+cN^{-2/3+\eta})< -2cN^{-1/3+\tau}+o(N^{-1/3+\tau})<0,\]
for $N$ sufficiently large. By \eqref{eqn: lambda1}, this implies 
\[\gamma-\lambda_1>cN^{-2/3+\eta}>cN^{-2/3+2\tau-\epsilon}\]
for any $\epsilon$ sufficiently small.
By the same argument one shows
\[\gamma-\lambda_1 < N^{-2/3+2\tau+\epsilon}. \]
\end{proof}
\end{lemma}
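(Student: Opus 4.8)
The plan is to establish \eqref{eqn: gamma-dist} by a monotonicity-and-sandwiching argument, exactly analogous to the lower bound already carried out in the excerpt. Since $G'(z) = 2\beta - m_N(z)$ is real and strictly increasing on $(\lambda_1,\infty)$ (each summand $1/(z-\lambda_i)$ is decreasing there, so $-m_N$ is increasing, and it ranges from $-\infty$ to $2\beta > 0$), the equation $G'(z)=0$ has a unique root $\gamma$, and to pin down its location it suffices to exhibit one point to the left of $\gamma$ where $G'<0$ and one point to the right where $G'>0$. The lower bound $\gamma - \lambda_1 > N^{-2/3+2\tau-\epsilon}$ is the content of the displayed computation: one evaluates $G'(\lambda_1 + cN^{-2/3+\eta})$ for $\tau<\eta<2\tau$ using Theorem~\ref{thm: outside-sc} to replace $m_N$ by $m_{\mathrm{sc}}$ with error $O(N^{-1/3+\delta})$, and finds it is still negative (the dominant term being $-2cN^{-1/3+\tau}$), forcing $\gamma$ to lie further right.

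For the upper bound $\gamma - \lambda_1 < N^{-2/3+2\tau+\epsilon}$, I would run the same scheme at the point $x_N = \lambda_1 + N^{-2/3+2\tau+\epsilon}$ and show $G'(x_N)>0$. Writing $\kappa_N = x_N - 1$, note $|1-\lambda_1|\le CN^{-2/3+\delta}$ by rigidity \eqref{eqn: rigidity}, and since $2\tau+\epsilon > \delta$ for small $\delta$, we get $\kappa_N \asymp N^{-2/3+2\tau+\epsilon}$ with $x_N > 1 + N^{-2/3+\epsilon/2}$, so Theorem~\ref{thm: outside-sc} applies and gives $m_N(x_N) = m_{\mathrm{sc}}(x_N) + O\!\big(N^{-1+\delta}/(\kappa_N + \kappa_N^2)\big) = m_{\mathrm{sc}}(x_N) + O(N^{-1/3-2\tau-\epsilon+\delta})$, an error term much smaller than $N^{-1/3+\tau}$. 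Expanding $m_{\mathrm{sc}}(x_N) = 2(-x_N + \sqrt{x_N^2-1})$ to leading order in $\kappa_N$ as $\sqrt{x_N^2-1} = \sqrt{(x_N-1)(x_N+1)} = (\sqrt 2 + o(1))\sqrt{\kappa_N} = (\sqrt2 + o(1))N^{-1/3+\tau+\epsilon/2}$, one finds
\[
G'(x_N) = 2\beta_N - m_N(x_N) = 2 - 2cN^{-1/3+\tau} - 2 + 2\sqrt2\,N^{-1/3+\tau+\epsilon/2} + \text{(lower order)} > 0
\]
for $N$ large, because the positive term $2\sqrt2\,N^{-1/3+\tau+\epsilon/2}$ dominates $2cN^{-1/3+\tau}$ by the factor $N^{\epsilon/2}$. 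Hence $\gamma < x_N$, which is the claimed upper bound.

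The only subtlety worth flagging is bookkeeping of the exponents: one must check that the "$+\lambda_1-1$" inside the square root genuinely sits at scale $N^{-2/3+2\tau+\epsilon}$ (not swamped by the rigidity error $N^{-2/3+\delta}$), which is why $\delta$ is chosen smaller than, say, $\tau$ and $\epsilon/2$; and that the local-law error from Theorem~\ref{thm: outside-sc} at the point $x_N$, namely $O(N^{-1+\delta}(\kappa_N+\kappa_N^2)^{-1})$, is $o(N^{-1/3+\tau})$ — this holds comfortably since $\kappa_N \gg N^{-2/3}$. All three competing scales ($\beta_N$-perturbation $N^{-1/3+\tau}$, square-root term $N^{-1/3+\tau+\epsilon/2}$, local-law error) are then unambiguously ordered. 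The main "obstacle" is thus not conceptual but purely the careful tracking of these polynomial exponents and the choice of the auxiliary parameters $\eta, \delta$ relative to $\tau, \epsilon$; once that is set up, both inequalities follow from the same one-line sign computation together with monotonicity of $G'$.
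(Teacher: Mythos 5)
Your proposal follows essentially the same route as the paper: monotonicity of $G'$ on $(\lambda_1,\infty)$ plus a sign check of $G'$ at a test point slightly inside the claimed interval (lower bound) and slightly outside it (upper bound), using Theorem~\ref{thm: outside-sc} together with rigidity to replace $m_N$ by $m_{\mathrm{sc}}$ and the expansion $\sqrt{x^2-1}\approx\sqrt{2\kappa}$ near the edge. The upper bound, which the paper dismisses with ``by the same argument,'' is carried out correctly in your write-up, with the exponent bookkeeping ($2\sqrt{2}\,N^{-1/3+\tau+\epsilon/2}$ dominating $2cN^{-1/3+\tau}$, and the local-law and rigidity errors of lower order) done as needed.
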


\begin{proposition}[Estimates on the derivatives]
  Let $\beta_N=1-N^{-1/3+\tau}$ for $\tau>0$, and let $\gamma$ satisfy $\gamma-\lambda_1 \ge N^{-2/3+2\tau-\epsilon}$. Let $s\in \mathbb{R}$. There is a constant such that with overwhelming probability,
\begin{equation}\label{eqn: bounded}
|G'(\gamma+is)|\le CN^{\epsilon'}.
\end{equation}
More generally,with overwhelming probability, for any integer $k\ge 2$:
\begin{equation}\label{eqn: Gk_bound}
|G^{(k)}(\gamma+is)|\le CN^{2k/3 -(2k-3)\tau-1+\epsilon'}.
\end{equation}
\begin{proof}
The bound \eqref{eqn: bounded} follows by the local semicircle law \eqref{eqn: sc_law}. The estimate \eqref{eqn: Gk_bound} is derived by a standard argument using the using \eqref{eqn: density} and the lower bound on $\gamma-\lambda_1$.
\end{proof}
\end{proposition}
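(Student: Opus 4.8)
The plan is to differentiate the explicit formula \eqref{eqn: Gdef} and reduce both bounds to estimates that hold on the intersection of the overwhelming-probability events of Section 2 (Theorem \ref{thm: outside-sc}, rigidity \eqref{eqn: rigidity}, the density estimate \eqref{eqn: density}). For $k\ge 2$ the term $2\beta z$ drops out and
\[
G^{(k)}(z)=\frac{(-1)^{k}(k-1)!}{N}\sum_{i=1}^{N}\frac{1}{(z-\lambda_i)^{k}},
\]
so, using $|\gamma+is-\lambda_i|\ge \gamma-\lambda_i\ge \gamma-\lambda_1$ for every $i$, one gets the \emph{deterministic} bound $|G^{(k)}(\gamma+is)|\le (k-1)!\,N^{-1}\sum_i(\gamma-\lambda_i)^{-k}$, uniform in $s\in\R$; thus \eqref{eqn: Gk_bound} reduces to controlling this sum. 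For $k=1$ I would instead use \eqref{eqn: Gprime}: $G'(\gamma+is)=2\beta-m_N(\gamma+is)$. By hypothesis $\gamma-\lambda_1\ge N^{-2/3+2\tau-\epsilon}$, and since rigidity gives $|\lambda_1-1|\le N^{-2/3+\epsilon}$, for $\epsilon$ small relative to $\tau$ the point $\gamma$ lies at distance $\kappa=\gamma-1\gtrsim N^{-2/3+2\tau-\epsilon}$ from $[-1,1]$, hence $\gamma+is$ is in the domain $S$ of Theorem \ref{thm: outside-sc}. Then \eqref{eqn: sc_law} gives $|m_N(\gamma+is)-m_{\mathrm{sc}}(\gamma+is)|\lesssim N^{-1+\delta}\kappa^{-1}\le N^{-1/3-2\tau+\epsilon+\delta}=o(1)$, and since $m_{\mathrm{sc}}$ is bounded uniformly on $S$ (it is $O(1)$ near the edge and $O(|z|^{-1})$ as $|z|\to\infty$), $|G'(\gamma+is)|\le |2\beta-m_{\mathrm{sc}}(\gamma+is)|+o(1)=O(1)$, which is \eqref{eqn: bounded}.

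It remains to bound $S_k:=N^{-1}\sum_{i}(\gamma-\lambda_i)^{-k}$ for $k\ge 2$. Put $d=\gamma-\lambda_1$, so $d\ge N^{-2/3+2\tau-\epsilon}$ and $\gamma-\lambda_i\ge d$ for all $i$; also $\gamma\ge 1$ (again using rigidity for $\lambda_1$ and $\epsilon\ll\tau$). Decompose the eigenvalues dyadically by $\gamma-\lambda_i\in(2^{j}d,2^{j+1}d]$, $j\ge0$. Since $\gamma\ge1$, the number of $i$ in band $j$ is at most $\#\{i:\lambda_i\ge 1-2^{j+1}d\}$, which by \eqref{eqn: density} and the square-root vanishing $\int_{1-t}^1\rho_{\mathrm{sc}}\asymp t^{3/2}$ of the semicircle density at the edge is $\lesssim N(2^{j}d)^{3/2}$ (the additive $O(N^{\epsilon})$ coming from the error in \eqref{eqn: density} is absorbed since $Nd^{3/2}\ge N^{3\tau-3\epsilon/2}\gg N^{\epsilon}$, and when $2^{j}d\gtrsim1$ the bound $N(2^{j}d)^{3/2}\ge N$ is trivial). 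Each term in band $j$ is at most $(2^{j}d)^{-k}$, so band $j$ contributes $\lesssim (2^{j}d)^{3/2-k}$ to $S_k$; summing the geometric series in $j$, which converges because $3/2-k\le-1/2<0$, gives $S_k\lesssim d^{3/2-k}$. Finally, since $3/2-k<0$ and $d\ge N^{-2/3+2\tau-\epsilon}$,
\[
S_k\lesssim\bigl(N^{-2/3+2\tau-\epsilon}\bigr)^{3/2-k}=N^{2k/3-(2k-3)\tau-1+(k-3/2)\epsilon},
\]
and this, multiplied by $(k-1)!$, yields \eqref{eqn: Gk_bound} with $\epsilon'=(k-3/2)\epsilon$.

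The one step that deserves care is the passage from the pointwise rigidity bound \eqref{eqn: rigidity} to the aggregate control of $\sum_i(\gamma-\lambda_i)^{-k}$: the eigenvalues closest to the spectral edge, where the rigidity error $N^{-2/3+\epsilon}i^{-1/3}$ is largest relative to the typical gap $1-\gamma_i\asymp(i/N)^{2/3}$, must be held a definite distance below $\gamma$, and this is exactly what the lower bound $\gamma-\lambda_1\ge N^{-2/3+2\tau-\epsilon}$ (guaranteed in the application by Lemma \ref{lem: gamma-dist}) provides once $\epsilon\ll\tau$. The dyadic counting then handles the bulk of the spectrum cleanly, avoiding any term-by-term case analysis. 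Everything else — the boundedness of $m_{\mathrm{sc}}$, the geometric summation — is routine; note only that the implied constant and $\epsilon'$ in \eqref{eqn: Gk_bound} depend on $k$ (through $(k-1)!$ and through the choice $\epsilon=\epsilon(k,\tau)$), which is harmless since $k$ is a fixed integer in every use.
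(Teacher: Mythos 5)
Your argument is correct and is precisely the "standard argument" the paper's two-line proof alludes to: the local law outside the spectrum (Theorem \ref{thm: outside-sc}) for the $k=1$ bound, and a dyadic decomposition around $\gamma$ controlled by the density estimate \eqref{eqn: density} together with the lower bound on $\gamma-\lambda_1$ for $k\ge 2$. You have simply filled in the details the authors omit (your bound for $k=1$ is in fact $O(1)$, stronger than the stated $CN^{\epsilon'}$), so there is nothing methodologically different to compare.
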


\section{Saddle point approximation}
\begin{proposition}\label{prop: Gt-exp}
Suppose $\lambda_1-\gamma>N^{-2/3+2\tau-\epsilon}$, and
\begin{equation}\label{eqn: parameters}
\begin{split}
\beta&=\beta_N=1-cN^{-1/3+\tau},\\
z&=\gamma+ir,\\
w&=\gamma+is,
\end{split}
\end{equation}
where $r,s\in \mathbb{R}$.

There is a $c(\tau)>0$ such that with overwhelming probability:
\[e^{\frac{N}{2}\tilde{G}(z,w)}=e^{\frac{N}{2}(G(z)+G(w))}\exp\left(\frac{t^2(1-\beta^2_N)}{4\beta^2}\frac{m_N(w)-m_N(z)}{w-z}\right)(1+O(N^{-c(\tau)})). \]
\begin{proof}
Replacing $t$ by $\sqrt{N(1-\beta_N^2)}t$ in \eqref{eqn: Gtilde}, the logarithm in the definition of $\tilde{G}(z,w)$ is
\begin{equation*}
  \log\big((z-\lambda_i)(w-\lambda_i)-\frac{t^2(1-\beta^2)}{4\beta^2 N}\big)= \log((z-\lambda_i)(w-\lambda_i))+\log\big(1-\frac{t^2(1-\beta^2)}{4\beta^2 N(z-\lambda_i)(w-\lambda_i)}\big).
\end{equation*}
By Taylor expansion, the second term is
\begin{equation}\log((z-\lambda_i)(w-\lambda_i))-\frac{t^2(1-\beta^2)}{4\beta^2 N(z-\lambda_i)(w-\lambda_i)}+\epsilon_{N,i}.
\end{equation}
Here,
\[|\epsilon_{N,i}|\le \frac{C}{N^{8/3-2\tau}}\frac{1}{|\lambda_i-z|^2|\lambda_i-w|^2},\]
since $1-\beta_N^2=O(N^{-1/3+\tau})$.

By \eqref{eqn: density} and the lower bound for $\lambda_1-\gamma$ \eqref{eqn: gamma-dist},
\begin{align}
\sum_{i=1}^N |\epsilon_{N,i}|&\le N^{-5/3+2\tau+\epsilon}\sum_{k=1}^{\big(\frac{2}{3}-2\tau\big)\log N}2^{5/2k}\\
&\le CN^{-2\tau+3\epsilon}.
\end{align}

Summing over $i$, this gives
\begin{align*}
&\sum_{i=1}^N \log((z-\lambda_i)(w-\lambda_i))-\frac{t^2(1-\beta^2)}{4\beta^2 N}\sum_{i=1}^N \frac{1}{(z-\lambda_i)(w-\lambda_i)}+\epsilon'_N\\
=&\sum_{i=1}^N \log((z-\lambda_i)(w-\lambda_i))+\frac{t^2(1-\beta^2)}{4\beta^2}\frac{m_N(w)-m_N(z)}{w-z}+\epsilon'_N,
\end{align*}
where $\epsilon_N'\le CN^{-2\tau+3\epsilon}$.

It follows that, for $z,w\in \gamma+i\mathbb{R}$,
\[e^{\frac{N}{2}\tilde{G}(z,w)}=e^{\frac{N}{2}(G(z)+G(w))}e^{-\frac{t^2(1-\beta^2_N)}{4\beta^2}\frac{m_N(w)-m_N(z)}{w-z}}\cdot (1+O(N^{-c})),\]
as claimed.
\end{proof}
\end{proposition}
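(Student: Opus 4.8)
The plan is to treat the coupling term in $\tilde G$ as a perturbation of the corresponding term of $G$: expand it inside each logarithm, recognize the leading perturbative piece as the claimed exponential factor, and push everything else into an $O(N^{-c(\tau)})$ error using the hypothesis $\gamma-\lambda_1>N^{-2/3+2\tau-\epsilon}$ together with the eigenvalue density estimate \eqref{eqn: density}. After replacing $t$ by $\sqrt{N(1-\beta_N^2)}\,t$ the quantity subtracted inside the logarithm of \eqref{eqn: Gtilde} becomes $t^2(1-\beta_N^2)/(4\beta^2N)$, and for each $i$ one writes
\[
\log\!\Big((z-\lambda_i)(w-\lambda_i)-\frac{t^2(1-\beta_N^2)}{4\beta^2N}\Big)=\log\big((z-\lambda_i)(w-\lambda_i)\big)+\log(1-x_i),\qquad x_i:=\frac{t^2(1-\beta_N^2)}{4\beta^2N(z-\lambda_i)(w-\lambda_i)}.
\]
Since $\re z=\re w=\gamma>\lambda_1\ge\lambda_i$, both $z-\lambda_i$ and $w-\lambda_i$ lie in the open right half-plane, so $\log\big((z-\lambda_i)(w-\lambda_i)\big)=\log(z-\lambda_i)+\log(w-\lambda_i)$ on the principal branch; applying $-\tfrac1N\sum_i(\cdot)$ to this part and restoring $2\beta(z+w)$ reproduces $G(z)+G(w)$ exactly. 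The whole question is thus reduced to $-\tfrac1N\sum_i\log(1-x_i)$.

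I would first check that $x_i$ is uniformly negligible: from $|z-\lambda_i|,|w-\lambda_i|\ge\gamma-\lambda_1\ge N^{-2/3+2\tau-\epsilon}$ and $1-\beta_N^2=O(N^{-1/3+\tau})$ one gets $|x_i|\le CN^{-3\tau+2\epsilon}$, which is $\le\tfrac12$ for all $i$ once $\epsilon$ is small and $N$ large. Hence $\log(1-x_i)=-x_i+\epsilon_{N,i}$ with $|\epsilon_{N,i}|\le C|x_i|^2\le CN^{-8/3+2\tau}\,|z-\lambda_i|^{-2}|w-\lambda_i|^{-2}$. For the linear part, the partial-fraction identity $\big((z-\lambda_i)(w-\lambda_i)\big)^{-1}=(w-z)^{-1}\big((z-\lambda_i)^{-1}-(w-\lambda_i)^{-1}\big)$ together with $\tfrac1N\sum_i(z-\lambda_i)^{-1}=-m_N(z)$ collapses $\sum_i x_i$ to a fixed multiple of $(m_N(w)-m_N(z))/(w-z)$; after the overall factor $\tfrac N2$ this is exactly the exponent $\frac{t^2(1-\beta_N^2)}{4\beta^2}\frac{m_N(w)-m_N(z)}{w-z}$ in the statement.

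It remains to control the remainder $\sum_i\epsilon_{N,i}$, for which I need $\sum_i|z-\lambda_i|^{-2}|w-\lambda_i|^{-2}\le\sum_i(\gamma-\lambda_i)^{-4}$. I would decompose dyadically according to the distance of $\lambda_i$ to the saddle point, $\gamma-\lambda_i\in[2^{k-1}d,2^kd)$ with $d:=\gamma-\lambda_1$ and $k\ge0$. By eigenvalue rigidity and the square-root vanishing of $\rho_{\mathrm{sc}}$ at the edge, the number of $\lambda_i$ within distance $u\in[d,1]$ of $\gamma$ is at most $CNu^{3/2}+CN^\epsilon$ (and at most $N$ for $u>1$), and since $d\ge N^{-2/3+2\tau-\epsilon}$ the $Nu^{3/2}$ term dominates the $N^\epsilon$ term in every relevant annulus once $\epsilon$ is small. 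Bounding the $k$-th annulus by $Nu_k^{3/2}\cdot u_k^{-4}$ with $u_k=2^kd$ and summing the geometric series (dominated by $k=0$) gives $\sum_i(\gamma-\lambda_i)^{-4}\le CNd^{-5/2}\le CN^{8/3-5\tau+O(\epsilon)}$, hence $\big|\sum_i\epsilon_{N,i}\big|\le CN^{-8/3+2\tau}\cdot N^{8/3-5\tau+O(\epsilon)}=O(N^{-c(\tau)})$ for some $c(\tau)>0$ when $\tau$ is fixed and $\epsilon$ small. Combining, $\tfrac N2\tilde G(z,w)=\tfrac N2(G(z)+G(w))+\frac{t^2(1-\beta_N^2)}{4\beta^2}\frac{m_N(w)-m_N(z)}{w-z}+E$ with $E=O(N^{-c(\tau)})$, and exponentiating (using $e^{E}=1+O(N^{-c(\tau)})$, the $\exp(\cdot)$ correction being kept explicit as in the statement) yields the claimed identity with multiplicative error $1+O(N^{-c(\tau)})$.

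The \textbf{main obstacle} is the remainder estimate of the previous paragraph: one must arrange that the gain $N^{-8/3+2\tau}$ coming from $(1-\beta_N^2)^2/N^2$ beats the loss $Nd^{-5/2}\approx N^{8/3-5\tau}$ produced by the pile-up of eigenvalues near the spectral edge, which leaves only the narrow surplus $N^{-3\tau+O(\epsilon)}$. This is precisely where the hypothesis $\tau>0$ enters, and it is the same obstruction that keeps the analysis away from the window $|\beta-1|\lesssim N^{-1/3}$: a cruder count, using the bulk bound $\#\{\lambda_i\in I\}\lesssim N|I|$ in place of the edge scaling $Nu^{3/2}$, does not close the estimate for small $\tau$. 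The only other point requiring care is the branch bookkeeping for the logarithm, which is settled once and for all by the observation that $\re z,\re w>\lambda_1$ keeps every factor $z-\lambda_i$ and $w-\lambda_i$ in the right half-plane.
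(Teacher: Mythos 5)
Your proof follows essentially the same route as the paper's: the same splitting of the logarithm into $\log((z-\lambda_i)(w-\lambda_i))+\log(1-x_i)$, the same first-order Taylor expansion with quadratic remainder, the same partial-fraction identity producing $(m_N(w)-m_N(z))/(w-z)$, and the same dyadic eigenvalue-count argument (rigidity/density plus $\gamma-\lambda_1\ge N^{-2/3+2\tau-\epsilon}$) to bound the remainder, which you in fact estimate slightly more sharply ($N^{-3\tau+O(\epsilon)}$ versus the paper's $N^{-2\tau+3\epsilon}$) and with the uniform smallness of $x_i$ and the branch choice made explicit. The only caveat is a bookkeeping issue you share with the paper itself: strictly, $-\tfrac1N\sum_i\log(1-x_i)$ multiplied by $\tfrac N2$ yields $\tfrac12\sum_i x_i=\tfrac{t^2(1-\beta_N^2)}{8\beta^2}\,\tfrac{m_N(w)-m_N(z)}{w-z}$, i.e.\ half the exponent displayed in the statement (whose sign moreover differs from the paper's own concluding display), so your assertion of exact agreement inherits the same constant-factor looseness rather than introducing a new gap.
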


\begin{lemma}
Assume the restrictions \eqref{eqn: parameters} on the parameters hold. In addition that
\begin{equation}
|s|, |r|\le N^{-2/3+\delta_1}.
\end{equation}
Suppose also that
\[\tau<\delta_1<2\tau\]
is sufficently close to $2\tau$. Then with overwhelming probability,
\begin{equation}\label{eqn: quotient-bound}
\left|\frac{m_N(z)-m_N(w)}{z-w}-m_{\mathrm{sc}}(\gamma)\right|\le N^{-c(\tau)}
\end{equation}
for any $c(\tau)<2\tau-\delta_1$ and $N\ge N_0(c)$.
\begin{proof}
We have
\begin{equation}\label{eqn: mzw-quotient}
  \begin{split}
  \left|\frac{m_N(z)-m_N(w)}{z-w}-m_N'(\gamma)\right|&\le \left|\int_0^1\big( m'_N(z+s(z-w))-m_N'(\gamma)\big)\,\mathrm{d}s\right|\\
  &\le \max_{\zeta\in D(\tau,\delta_1) }|m''_N(\zeta)|\cdot N^{-2/3+\delta_1},
  \end{split}
\end{equation}
where
\[D(\tau,\delta_1):=\{\zeta=\gamma+iu: |u|\le N^{-2/3+\delta_1}\}.\]
By \eqref{eqn: bounded}, $\max_{D(\tau,\delta_1)}|m''_N(\zeta)|\le N^{1-3\tau+\epsilon}$ with overwhelming probability, where $\epsilon>0$ is arbitrary and $N$ large. So
\begin{align*}
(1-\beta_N^2)\left|\frac{m_N(z)-m_N(w)}{z-w}-m_N'(\gamma)\right|&\le CN^{-1/3+\tau} N^{-2/3+\delta_1}\cdot N^{1-3\tau+\epsilon} \\
&\le C N^{-2\tau+\delta_1+\epsilon}.
\end{align*}

By \eqref{eqn: sc_law} with $\kappa=N^{-2/3+2\tau-\epsilon}$, we have for $r=N^{2/3-\tau+\epsilon}$:
\begin{equation}\label{eqn: deriv-diff}
\begin{split}
  (1-\beta^2_N)|m_N'(\gamma)-m'_{\mathrm{sc}}(\gamma)|&=(1-\beta_N^2)\left|\frac{1}{\pi i}\oint_{|\gamma-w|=r}\frac{m_N(w)-m_{\mathrm{sc}}(w)}{(\gamma-w)^2}\,\mathrm{d}w\right|\\
                                      &\le CN^{-1/3+\tau}N^{-1+\delta}N^{2/3-2\tau+\epsilon} N^{2/3-\tau+\epsilon}\\
  &\le CN^{-2\tau+\delta+2\epsilon}.
\end{split}
\end{equation}
Here, we have used $\gamma-\lambda_1\ge N^{-2/3+\tau-\epsilon}$.
\end{proof}
\end{lemma}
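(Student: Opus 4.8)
The plan is to insert $m_N'(\gamma)$, and then the semicircle derivative, via two triangle inequalities:
\[
\left|\frac{m_N(z)-m_N(w)}{z-w}-m_{\mathrm{sc}}'(\gamma)\right|\leq\left|\frac{m_N(z)-m_N(w)}{z-w}-m_N'(\gamma)\right|+\bigl|m_N'(\gamma)-m_{\mathrm{sc}}'(\gamma)\bigr|.
\]
I compare with the semicircle \emph{derivative} here because the difference quotient equals $\frac{1}{N}\sum_i\frac{1}{(\lambda_i-z)(\lambda_i-w)}$, which reduces to $m_N'(\gamma)$ at $z=w=\gamma$; note also that the quantity actually entering the saddle point analysis is $1-\beta_N^2$ times this difference quotient, and since $1-\beta_N^2$ has order $N^{-1/3+\tau}$ while $m_{\mathrm{sc}}'(\gamma)$ itself has order $N^{1/3-\tau}$, it is this prefactor that turns the two crude bounds below into a genuine power saving.

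For the first term I would use that $m_N$ is holomorphic on the half-plane $\{\mathrm{Re}\,z>\lambda_1\}$, which contains the vertical segment from $w=\gamma+is$ to $z=\gamma+ir$, so that the difference quotient equals $\int_0^1 m_N'\bigl(w+\theta(z-w)\bigr)\,\mathrm{d}\theta$. Subtracting $m_N'(\gamma)$ under the integral and estimating bluntly gives
\[
\left|\frac{m_N(z)-m_N(w)}{z-w}-m_N'(\gamma)\right|\leq 2N^{-2/3+\delta_1}\max_{|u|\leq N^{-2/3+\delta_1}}\bigl|m_N''(\gamma+iu)\bigr|,
\]
where I used $|r|,|s|\leq N^{-2/3+\delta_1}$. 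Since $m_N''$ agrees with $G'''$ up to sign, the derivative estimate \eqref{eqn: Gk_bound} with $k=3$ bounds the maximum by $CN^{1-3\tau+\epsilon}$ with overwhelming probability, so after multiplying by $1-\beta_N^2$ this piece is $O\bigl(N^{-2\tau+\delta_1+\epsilon}\bigr)$, which decays since $\delta_1<2\tau$.

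For the second term I would represent $m_N'(\gamma)-m_{\mathrm{sc}}'(\gamma)$ as a Cauchy integral,
\[
m_N'(\gamma)-m_{\mathrm{sc}}'(\gamma)=\frac{1}{2\pi i}\oint_{|\zeta-\gamma|=r_0}\frac{m_N(\zeta)-m_{\mathrm{sc}}(\zeta)}{(\zeta-\gamma)^2}\,\mathrm{d}\zeta,
\]
over a circle of radius $r_0$ of order $N^{-2/3+\tau-\epsilon}$. By \eqref{eqn: gamma-dist} this radius is smaller than $\gamma-\lambda_1$, so the circle encloses no eigenvalue, and by the rigidity bound $\lambda_1\geq 1-CN^{-2/3+\epsilon}$ implied by \eqref{eqn: rigidity} together with the lower bound on $\gamma-\lambda_1$, it lies in the domain of Theorem \ref{thm: outside-sc}, where $\kappa+\eta$ is of the order of the distance from $\gamma$ to the edge, namely $N^{-2/3+2\tau-\epsilon}$. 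Then \eqref{eqn: sc_law} gives $|m_N-m_{\mathrm{sc}}|\leq N^{-1+\delta}N^{2/3-2\tau+\epsilon}$ on the circle, the kernel $(\zeta-\gamma)^{-2}$ and the arclength contribute a further $r_0^{-1}=N^{2/3-\tau+\epsilon}$, and one more factor $1-\beta_N^2$ of order $N^{-1/3+\tau}$ makes this piece $O\bigl(N^{-2\tau+\delta+2\epsilon}\bigr)$ as well. Collecting the two estimates and taking $\epsilon$ and the local-law parameter $\delta$ small enough relative to $2\tau-\delta_1$ yields \eqref{eqn: quotient-bound} for any $c(\tau)<2\tau-\delta_1$.

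The main obstacle is that, near criticality, essentially every resolvent and derivative quantity that appears is polynomially large in $N$ — for instance $m_{\mathrm{sc}}'(\gamma)$ has order $N^{1/3-\tau}$ and $\max|m_N''|$ near $\gamma$ has order roughly $N^{1-3\tau}$ — so neither piece becomes small until the $N^{-1/3+\tau}$ factor from $1-\beta_N^2$ is extracted, and even then the first piece is only $o(1)$ because of the constraint $\delta_1<2\tau$. The real work is therefore not in any individual estimate but in keeping track of the several competing powers of $N$, and in checking that the Cauchy contour in the second step can be placed simultaneously inside the eigenvalue-free half-plane $\{\mathrm{Re}>\lambda_1\}$ and inside the region on which the exterior local semicircle law \eqref{eqn: sc_law} is valid.
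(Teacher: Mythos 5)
Your proposal follows essentially the same route as the paper's proof: the same triangle-inequality split at $m_N'(\gamma)$, the same fundamental-theorem-of-calculus bound on the difference quotient using the $k=3$ derivative estimate \eqref{eqn: Gk_bound} (giving $\max|m_N''|\le N^{1-3\tau+\epsilon}$), and the same Cauchy-integral comparison of $m_N'(\gamma)$ with $m_{\mathrm{sc}}'(\gamma)$ over a circle of radius of order $N^{-2/3+\tau-\epsilon}$ combined with the exterior local law \eqref{eqn: sc_law}, with the prefactor $1-\beta_N^2\sim N^{-1/3+\tau}$ supplying the power saving. Your reading of the estimate as a bound on the $(1-\beta_N^2)$-weighted distance to $m_{\mathrm{sc}}'(\gamma)$ is exactly what the paper's proof actually establishes and what is used in Proposition \ref{prop: saddle}, and your choice of contour radius corrects what is evidently a typo ($r=N^{2/3-\tau+\epsilon}$) in the paper's own argument.
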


\subsection{Control of the region outside the saddle point}
In this section, we control the quantity
\[\int_{\gamma-i\infty}^{\gamma+i\infty} e^{N(G(\gamma+it)-G(\gamma))}\,\mathrm{d}t\]
in the region away from the saddle point under the assumption of eigenvalue rigidity. In particular, we have, for any $\epsilon>0$ and large $N$:
\[\lambda_1-\gamma \ge N^{-2/3+2\tau-\epsilon}.\]
\begin{proposition}[Away from the saddle point]
We have
\begin{equation}\label{eqn: int-away}
  \int_{\{s:|s|>N^{-2/3+\delta_1}\}} e^{\frac{N}{2}(G(\gamma+i s)-G(\gamma))}\,\mathrm{d}s\le C\exp(-N^{c(\tau)}),
  \end{equation}
for some $c(\tau)>0$.
\begin{proof}
 As in \cite[Lemma 6.3]{baiklee}, the integrand in \eqref{eqn: int-away} is bounded by
\begin{equation}
\label{eqn: baiklee-outside}
\exp\left(-\frac{1}{4}\sum_{j=1}^N\log\left(1+\frac{s^2}{(\gamma-\lambda_j)^2}\right)\right).
\end{equation}

Since $\gamma-\lambda_1\le N^{-2/3+\tau+\epsilon}$, by \eqref{eqn: density}, for any $\delta>0$ we have with overhwelming probability:
\begin{equation}\label{eqn: ev-counts}
\#\{j: |\lambda_j-\gamma|\le CN^{-2/3+2\tau}\} \ge N^{3\tau-2\epsilon},
\end{equation}
for any $\epsilon>0$ and large enough $N$.
This implies
\[\sum_{j=1}^N \log\left(1+\frac{s^2}{(\gamma-\lambda_j)^2}\right)\ge N^{3\tau-2\epsilon}\log(1+N^{2(\delta_1-\tau)}),\]
whence
\[\int_{\{s:N^{-2/3+\delta_1}<|s|\le 10\}} e^{\frac{N}{2}(G(\gamma+i s)-G(\gamma))}\,\mathrm{d}s\le C\exp(-N^{3\tau-2\epsilon})\]
for large enough $N$.

Since $|\lambda_j-\gamma|\le 4$ with overwhelming probability \cite[Theorem H.1]{knowlesBG}, the integral over the region $|s|\ge 10$ is also exponential small.
\end{proof}
\end{proposition}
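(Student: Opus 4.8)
The plan is to bound the \emph{modulus} of the integrand pointwise and show that it is already super-polynomially small throughout $|s|>N^{-2/3+\delta_1}$, so that integrating over the whole vertical line costs at most a finite factor. The first step is an elementary identity: since
\[G(\gamma+is)-G(\gamma)=2\beta is-\frac1N\sum_{j=1}^N\big(\log(\gamma-\lambda_j+is)-\log(\gamma-\lambda_j)\big)\]
and the linear term is purely imaginary, taking real parts gives
\[\Big|e^{\frac N2(G(\gamma+is)-G(\gamma))}\Big|=\exp\Big(-\tfrac14\sum_{j=1}^N\log\Big(1+\tfrac{s^2}{(\gamma-\lambda_j)^2}\Big)\Big),\]
which is exactly the bound used in \cite[Lemma 6.3]{baiklee}. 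Everything then reduces to a lower bound for $\Sigma(s):=\sum_{j=1}^N\log\big(1+s^2/(\gamma-\lambda_j)^2\big)$ that holds uniformly for $|s|>N^{-2/3+\delta_1}$; since each summand is nondecreasing in $|s|$, it is enough to bound $\Sigma$ from below at the left endpoint of each subinterval I use.

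I would split the range into a near window $N^{-2/3+\delta_1}<|s|\leq10$ and a far window $|s|>10$. On the near window the key input is the location of the saddle point: by Lemma~\ref{lem: gamma-dist} and eigenvalue rigidity, $\gamma-\lambda_1\in[N^{-2/3+2\tau-\epsilon},N^{-2/3+2\tau+\epsilon}]$ and $|\lambda_1-1|\leq N^{-2/3+\epsilon}$, so the interval $I=[\lambda_1-N^{-2/3+2\tau},\lambda_1]$ abuts the edge of the semicircle and every $\lambda_j\in I$ satisfies $|\gamma-\lambda_j|\leq2N^{-2/3+2\tau+\epsilon}$. Using \eqref{eqn: density} together with the square-root vanishing of $\rho_{\mathrm{sc}}$ at $\pm1$, one gets $\int_I\rho_{\mathrm{sc}}\gtrsim N^{-1+3\tau}$, hence $I$ contains $\gtrsim N^{3\tau}$ eigenvalues. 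Since $\delta_1<2\tau$, at $|s|=N^{-2/3+\delta_1}$ every ratio $s^2/(\gamma-\lambda_j)^2$ is $\leq N^{2(\delta_1-2\tau)+2\epsilon}<1$ (because $\gamma-\lambda_j\geq\gamma-\lambda_1$), so $\log(1+x)\geq x/2$ applies to each term; keeping only the eigenvalues in $I$ gives $\Sigma(N^{-2/3+\delta_1})\gtrsim N^{3\tau}\cdot N^{2(\delta_1-2\tau)-2\epsilon}=N^{2\delta_1-\tau-2\epsilon}$, an exponent that is strictly positive — in fact larger than $\tau$, since $\delta_1>\tau$. Hence on this window the integrand is $\leq\exp(-cN^{2\delta_1-\tau-2\epsilon})$, and its contribution to \eqref{eqn: int-away} is at most $20\exp(-cN^{2\delta_1-\tau-2\epsilon})\leq C\exp(-N^{c(\tau)})$ for any $c(\tau)<2\delta_1-\tau$.

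On the far window I would use the edge bound $\max_j|\lambda_j|\leq4$ (\cite[Theorem H.1]{knowlesBG}, overwhelming probability) together with $\gamma\leq1+o(1)$, so that $|\gamma-\lambda_j|\leq6$ for all $j$ and $N$ large; then for $|s|>10$ each summand of $\Sigma$ is at least $\log(1+s^2/36)$, the integrand is $\leq(1+s^2/36)^{-N/4}$, and $\int_{|s|>10}(1+s^2/36)^{-N/4}\,\mathrm{d}s\leq(1+100/36)^{-N/8}\int_{\mathbb R}(1+s^2/36)^{-1}\,\mathrm{d}s\leq Ce^{-cN}$ for $N$ large, which is exponentially small. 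Adding the two contributions yields \eqref{eqn: int-away}.

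The main obstacle is the near-window estimate. Because $\delta_1<2\tau$, each eigenvalue close to $\gamma$ contributes only the \emph{small} quantity $s^2/(\gamma-\lambda_j)^2\asymp N^{2(\delta_1-2\tau)}$ rather than an $O(1)$ amount, so the decay is not produced by any single term; it comes from balancing the large multiplicity $\sim N^{3\tau}$ of near-edge eigenvalues (furnished by the density estimate \eqref{eqn: density} and the square-root edge behavior of $\rho_{\mathrm{sc}}$) against the smallness of each term. The product $N^{2\delta_1-\tau}$ is a positive power of $N$ precisely because $\delta_1>\tau/2$, and the hypothesis $\delta_1>\tau$ leaves room to absorb the $\epsilon$-losses coming from rigidity; keeping that bookkeeping straight — and checking that \eqref{eqn: density} really delivers $\gtrsim N^{3\tau}$ eigenvalues in an interval of length $\asymp\gamma-\lambda_1$ abutting the edge — is the one place where real care is needed.
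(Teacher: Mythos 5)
Your proof is correct and follows essentially the same route as the paper: the same modulus bound $\exp\bigl(-\tfrac14\sum_j\log(1+s^2/(\gamma-\lambda_j)^2)\bigr)$, the same split at $|s|=10$, and the same mechanism of extracting decay from the $\gtrsim N^{3\tau-\epsilon}$ eigenvalues lying within $O(N^{-2/3+2\tau})$ of $\gamma$. Your near-window bookkeeping (each term of size $\asymp N^{2(\delta_1-2\tau)}$, total $\asymp N^{2\delta_1-\tau}$, positive since $\delta_1>\tau$) is if anything slightly more careful than the exponent displayed in the paper, but the argument is the same.
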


\subsection{Approximation near the saddle point}

\begin{proposition}[Approximation in the central region]\label{prop: taylor}
With overwhelming probability,
\[\int_{\{|\Im z|\le N^{-2/3+\delta_1}\}} e^{\frac{N}{2}(G(z)-G(\gamma))}\,\mathrm{d}z=i\sqrt{\frac{4\pi}{G''(\gamma)}}(1+O(N^{-c}))\]
for some $c=c(\tau)>0$.
\begin{proof}
Since $s\in D(\tau,\delta_1)$, using \eqref{eqn: Gk_bound}, we have by Taylor expansion:
\begin{equation}\label{eqn: G''}
  \begin{split}
N(G(\gamma+is)-G(\gamma))&= -\frac{s^2}{2}G''(\gamma)+\frac{N^{-1/2}|s|^3}{3}\max_{s\in D(\tau,\delta_1)}|G^{(3)}(\gamma+is)|\\
                         &=-\frac{s^2}{2}G''(\gamma)+|s|^3\cdot O(N^{1/2-6\tau+3\epsilon})\\
                         &=-\frac{s^2}{2}G''(\gamma)+O(N^{-3(2\tau-\delta_1-\epsilon)}).
                         \end{split}
\end{equation}
By \eqref{eqn: density}, we have the following lower bound for $G''(\gamma)$:
\begin{equation}\label{eqn: G''-lwr-bound}
\begin{split}
  G''(\gamma)&\ge \frac{1}{N}\sum_{i=1}^N \frac{1}{(\gamma-\lambda_i)^2} \\
             &\ge \frac{1}{N}\sum_{\{i:|\lambda_i-\gamma|\le N^{-2/3+2\tau}\}}\frac{1}{(\gamma-\lambda_i)^2}\\
             &\ge N^{4/3-4\tau-1}\#\{i:|\lambda_i-\gamma|\le N^{-2/3+2\tau}\}\\
  &\ge N^{1/3-\tau-\epsilon}.
\end{split}
\end{equation}

Rescaling $s=\Im z$ in the contour integral, we obtain
\[iN^{-1/2}\int_{|s|\le N^{-1/6+\delta_1}} e^{\frac{N}{2}(G(\gamma+i\frac{s}{N^{1/2}})-G(\gamma))}\,\mathrm{d}s.\]
Inserting the expansion \eqref{eqn: G''} into the integral, we obtain the two terms
\[\left(\int_{|s|\le N^{-1/6+\tau/2+\epsilon/4} }+\int_{N^{-1/6+\tau/2+\epsilon/4}<|s|\le N^{-1/6+\delta_1}}\right)  e^{\frac{N}{2}(G(\gamma+i\frac{s}{N^{1/2}})-G(\gamma))} \,\mathrm{d}s =\mathrm{I}+\mathrm{II},\]
with
\[ \mathrm{I}=\sqrt{\frac{4\pi}{G''(\gamma)}}\big(1+O(N^{-c'})\big),\quad  \mathrm{II}=e^{-cN^{c''}}.\]
\end{proof}
\end{proposition}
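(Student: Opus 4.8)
\emph{Strategy.} The plan is to run the classical Laplace/steepest-descent estimate along the vertical line through the saddle $\gamma$, the only non-routine feature being that the derivatives of $G$ at $\gamma$ are not $O(1)$ but grow polynomially in $N$, because $\gamma$ lies at distance only $\asymp N^{-2/3+2\tau}$ from the singularities $\lambda_i$. Concretely one has $G''(\gamma)\asymp N^{1/3-\tau}$ and, by \eqref{eqn: Gk_bound} with $k=3$, $|G^{(3)}(\gamma+iu)|\le CN^{1-3\tau+\epsilon'}$, and the whole argument amounts to tracking these exponents. First I would parametrize $z=\gamma+is/\sqrt N$, so that the integral becomes $\tfrac{i}{\sqrt N}\int_{|s|\le N^{-1/6+\delta_1}}e^{\frac N2(G(\gamma+is/\sqrt N)-G(\gamma))}\,\mathrm ds$. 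Since $G'(\gamma)=0$, Taylor's theorem along this line gives
\[\tfrac N2\bigl(G(\gamma+\tfrac{is}{\sqrt N})-G(\gamma)\bigr)=-\tfrac{s^2}{4}G''(\gamma)+\tfrac12 R_N(s),\qquad |R_N(s)|\le\frac{|s|^3}{6\sqrt N}\max_{|u|\le|s|/\sqrt N}|G^{(3)}(\gamma+iu)|,\]
the scaling being chosen so the quadratic coefficient is exactly $-\tfrac14 G''(\gamma)$, with the lower bound $G''(\gamma)=\tfrac1N\sum_i(\gamma-\lambda_i)^{-2}\ge N^{1/3-\tau-\epsilon'}$ coming from \eqref{eqn: density} and $\gamma-\lambda_1\lesssim N^{-2/3+2\tau}$.

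\emph{Inner region.} Next I would split the $s$–integral at $|s|=N^{-1/6+\tau/2+\epsilon/4}$, calling the inner piece $\mathrm I$ and the outer one $\mathrm{II}$. On the inner region $|R_N(s)|\le C|s|^3 N^{1/2-3\tau+\epsilon'}\le CN^{-3\tau/2+\epsilon'+3\epsilon/4}=O(N^{-c})$, so $e^{R_N(s)/2}=1+O(N^{-c})$ uniformly there and $\mathrm I=(1+O(N^{-c}))\int_{|s|\le N^{-1/6+\tau/2+\epsilon/4}}e^{-s^2 G''(\gamma)/4}\,\mathrm ds$. Extending the Gaussian to all of $\mathbb R$ costs only a tail of size $O\!\bigl(\exp(-\tfrac14 a^2 G''(\gamma))\bigr)$ with $a=N^{-1/6+\tau/2+\epsilon/4}$; since $a^2 G''(\gamma)\ge N^{\epsilon/2-\epsilon'}$ and $\epsilon'$ in the density/rigidity estimates may be chosen $<\epsilon/2$, this is $O(e^{-N^{c''}})$ with $c''=\epsilon/2-\epsilon'>0$. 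Hence $\mathrm I=\sqrt{4\pi/G''(\gamma)}\,(1+O(N^{-c}))$.

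\emph{Outer region and conclusion.} On $\mathrm{II}$ the cubic remainder is no longer $o(1)$, so instead I would bound the modulus of the integrand directly by the pointwise estimate of \cite[Lemma 6.3]{baiklee} (cf.\ \eqref{eqn: baiklee-outside}), $\bigl|e^{\frac N2(G(z)-G(\gamma))}\bigr|\le\exp\bigl(-\tfrac14\sum_j\log(1+(\Im z)^2/(\gamma-\lambda_j)^2)\bigr)$. Since $\delta_1<2\tau$ forces $\Im z\le N^{-2/3+\delta_1}<\gamma-\lambda_1\le\gamma-\lambda_j$, every summand has argument $\le1$, so $\log(1+x)\ge x/2$ gives $\sum_j\log(\cdots)\ge\tfrac12(\Im z)^2\,N\,G''(\gamma)\ge\tfrac12 N^{\epsilon/2-\epsilon'}=\tfrac12 N^{c''}$ throughout $\mathrm{II}$; integrating $e^{-cN^{c''}}$ over the bounded range of $s$ yields $\mathrm{II}=O(e^{-cN^{c''}})$. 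Adding, $\mathrm I+\mathrm{II}=\sqrt{4\pi/G''(\gamma)}\,(1+O(N^{-c}))$, and restoring the prefactor $i/\sqrt N$ gives $i\sqrt{4\pi/(N G''(\gamma))}\,(1+O(N^{-c}))$ — the stated formula, with the power of $N$ matching the partition-function computation quoted in the outline.

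\emph{Main obstacle.} The delicate point is precisely the matching in the two previous steps: one must be able to place the cutoff $N^{-1/6+\tau/2+\epsilon/4}$ so that the third-order Taylor remainder is genuinely $o(1)$ below it while the Gaussian weight, whose width is $G''(\gamma)^{-1/2}\asymp N^{-1/6+\tau/2}$, has already decayed super-polynomially above it. This leaves a margin of size only $N^{\epsilon/2-\epsilon'}$, so it works only because $\tau>0$, because $G''(\gamma)$ is genuinely of order $N^{1/3-\tau}$, and because $\tau<\delta_1<2\tau$ keeps $\Im z$ below $\gamma-\lambda_1$ on the entire window; higher-order Taylor terms need no separate treatment, being already absorbed into $R_N(s)$.
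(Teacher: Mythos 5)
Your argument is correct and follows essentially the same route as the paper: rescale by $N^{-1/2}$, Taylor expand to second order with the cubic remainder controlled by \eqref{eqn: Gk_bound}, lower bound $G''(\gamma)$ via \eqref{eqn: density} and the location of $\gamma$, split at $|s|=N^{-1/6+\tau/2+\epsilon/4}$, and evaluate the Gaussian integral in the inner region. The only minor differences are cosmetic: you bound the outer annulus by the explicit modulus bound \eqref{eqn: baiklee-outside} rather than letting the quadratic term dominate the Taylor remainder there, and you retain the $N^{-1/2}$ prefactor so that your final expression $i\sqrt{4\pi/(NG''(\gamma))}$ matches the normalization used in the outline and in the proof of Theorem \ref{thm: clt}.
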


\begin{proposition}[Saddle point approximation of the numerator]\label{prop: saddle}
  With overwhelming probability,
  \[\int_{\gamma-i\infty}^{\gamma+i\infty} \int_{\gamma-i\infty}^{\gamma+i\infty}e^{\frac{N}{2}(\tilde{G}(z,w)-2G(\gamma))}\,\mathrm{d}z\mathrm{d}w=-\frac{4\pi}{NG''(\gamma)}e^{\frac{t^2(1-\beta_N^2)}{2\beta_N^2}m'_{\mathrm{sc}}(\gamma)} \left(1+O(N^{-c(\tau)})\right),\]
  for some $c(\tau)>0$.
\begin{proof}
 Note the estimate
 \begin{equation}\label{eqn: mq-bound}
\left|\frac{m_N(w)-m_N(z)}{w-z}\right|\le CN^{\epsilon}
\end{equation}
 for $\epsilon>0$ arbitrary and $N$ large. This follows from \eqref{eqn: bounded} using \eqref{eqn: mzw-quotient}.

\begin{equation}\label{eqn: 4-terms}
\begin{split}
  \int_{\gamma-i\infty}^{\gamma+i\infty} \int_{\gamma-i\infty}^{\gamma+i\infty}e^{\frac{N}{2}(\tilde{G}(z,w)-2G(\gamma))}\,\mathrm{d}z\mathrm{d}w =& \int_{|s|\le N^{-2/3+\delta_1}}\int_{|r|\le N^{-2/3+\delta_1}} \cdots \\
                                                                                                                                                  +& \int_{|s|\le N^{-2/3+\delta_1}}\int_{|r|> N^{-2/3+\delta_1}}\cdots \\
                                                                                                                                                 +&\int_{|s|> N^{-2/3+\delta_1}}\int_{|r|\le N^{-2/3+\delta_1}}\cdots\\
  +& \int_{|s|> N^{-2/3+\delta_1}}\int_{|r|> N^{-2/3+\delta_1}} \cdots.
\end{split}
\end{equation}

The first three terms in \eqref{eqn: 4-terms} are bounded by $C\exp(-N^{c(\tau)})$ by \eqref{eqn: mq-bound}, \eqref{eqn: int-away} and Proposition \ref{prop: taylor}.

For the first term, we have by \eqref{eqn: quotient-bound},
\begin{align}
  &\int_{|s|\le N^{-2/3+\delta_1}}\int_{|r|\le N^{-2/3+\delta_1}}e^{\frac{N}{2}\tilde{G}(z,w)}\,\mathrm{d}w\mathrm{d}z \nonumber \\
  = &e^{\frac{t^2(1-\beta_N^2)}{4\beta_N^2}m'_{\mathrm{sc}}(\gamma)} \int_{|s|\le N^{-2/3+\delta_1}}\int_{|r|\le N^{-2/3+\delta_1}} e^{\frac{N}{2}(G(z)+G(w)-2G(\gamma))}\,\mathrm{d}w\mathrm{d}z \label{eqn: central} \\
  +&O(N^{-2\tau+\delta_1+\epsilon})\int_{|s|\le N^{-2/3+\delta_1}}\int_{|r|\le N^{-2/3+\delta_1}} |e^{\frac{N}{2}(G(z)+G(w)-2G(\gamma))}|\,\mathrm{d}w\mathrm{d}z. \label{eqn: off-center}
\end{align}
For \eqref{eqn: central}, we use Proposition \ref{prop: taylor}. For \eqref{eqn: off-center}, we use the Taylor expansion \eqref{eqn: G''} to replace the region of integration by $\{|s|, |r|\le N^{-2/3+\tau+\epsilon}\}$ (with an exponentially small error), and then find the bound
\[\eqref{eqn: off-center}=O(N^{-4/3+2\epsilon}).\]
Finally, by \eqref{eqn: G''-lwr-bound}, we have
\[\frac{1}{NG''(\gamma)}=O(N^{-4/3+\tau+\epsilon}),\]
and the result follows from this.
\end{proof}
\end{proposition}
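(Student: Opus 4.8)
The plan is to evaluate the double contour integral by a two--dimensional steepest--descent around $z=w=\gamma$, bootstrapping on the one--dimensional estimates already established. First I would split the integral over $(\gamma+i\R)^2$, writing $z=\gamma+ir$ and $w=\gamma+is$, into the four pieces obtained by imposing independently on $r$ and on $s$ the dichotomy $|{\cdot}|\le N^{-2/3+\delta_1}$ versus $|{\cdot}|>N^{-2/3+\delta_1}$, exactly as in \eqref{eqn: 4-terms}.

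For the three ``off--center'' pieces at least one of $z,w$ stays bounded away from $\gamma$. Here I would use Proposition~\ref{prop: Gt-exp} to factor $e^{\frac{N}{2}\tilde G(z,w)}=e^{\frac{N}{2}(G(z)+G(w))}\exp\!\big(\tfrac{t^2(1-\beta_N^2)}{4\beta^2}\tfrac{m_N(w)-m_N(z)}{w-z}\big)(1+O(N^{-c}))$, and then invoke the a priori bound \eqref{eqn: mq-bound}: since $1-\beta_N^2=O(N^{-1/3+\tau})$, the extra exponential factor is $\exp(O(N^{-1/3+\tau+\epsilon}))=O(1)$, so the integrand is dominated by $C\,\bigl|e^{\frac{N}{2}(G(z)-G(\gamma))}\bigr|\,\bigl|e^{\frac{N}{2}(G(w)-G(\gamma))}\bigr|$. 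In the variable that stays away from $\gamma$ the single integral is exponentially small by \eqref{eqn: int-away}, while the remaining single integral is $O(1)$ by Proposition~\ref{prop: taylor}; hence each off--center piece is $O(\exp(-N^{c(\tau)}))$.

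For the central piece $|r|,|s|\le N^{-2/3+\delta_1}$, I would apply Proposition~\ref{prop: Gt-exp} again and then replace the difference quotient $\tfrac{m_N(w)-m_N(z)}{w-z}$ by the constant $m'_{\mathrm{sc}}(\gamma)$ using \eqref{eqn: quotient-bound}: once the prefactor $1-\beta_N^2$ is absorbed this introduces only an $O(N^{-2\tau+\delta_1+\epsilon})$ error in the exponent, so by $e^{a+b}=e^a(1+O(|b|))$ the central piece equals
\[
e^{\frac{t^2(1-\beta_N^2)}{4\beta^2}m'_{\mathrm{sc}}(\gamma)}\int\!\!\int e^{\frac{N}{2}(G(z)+G(w)-2G(\gamma))}\,dz\,dw\;+\;O(N^{-2\tau+\delta_1+\epsilon})\int\!\!\int \bigl|e^{\frac{N}{2}(G(z)+G(w)-2G(\gamma))}\bigr|\,dz\,dw ,
\]
both over $\{|r|,|s|\le N^{-2/3+\delta_1}\}$. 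The first double integral factorizes into the square of a single integral and equals $-\tfrac{4\pi}{NG''(\gamma)}(1+O(N^{-c}))$ by Proposition~\ref{prop: taylor}. The modulus integral I would estimate with the Taylor expansion \eqref{eqn: G''} (restricting to $|r|,|s|\le N^{-2/3+\tau+\epsilon}$ at exponentially small cost), where $\re\big(G(\gamma+is)-G(\gamma)\big)=-\tfrac{1}{2}G''(\gamma)s^2+\text{(lower order)}$, to see it is likewise of order $1/(NG''(\gamma))$; since $\delta_1<2\tau$, the off--center remainder is thus of relative order $O(N^{-(2\tau-\delta_1-\epsilon)})=O(N^{-c(\tau)})$ against the main term. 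Collecting the pieces yields the asserted asymptotic, up to the routine reconciliation of the $\beta$ versus $\beta_N$ factors.

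The step I expect to be the real obstacle is controlling the off--center remainder against the main term of size $\sim 1/(NG''(\gamma))$: one must know that the modulus double integral is of the \emph{same} order as the genuine Gaussian integral, not larger, and that $N^{-2\tau+\delta_1+\epsilon}$ is a genuine negative power of $N$ --- which is exactly why the window exponent must obey $\delta_1<2\tau$. Threading this together with the lower bound $G''(\gamma)\ge N^{1/3-\tau-\epsilon}$ of \eqref{eqn: G''-lwr-bound} and the smallness $1-\beta_N^2=O(N^{-1/3+\tau})$, while keeping every exponent mutually compatible across the nested regimes, is the delicate part.
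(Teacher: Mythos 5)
Your proposal is correct and follows essentially the same route as the paper's proof: the same four-way decomposition \eqref{eqn: 4-terms}, exponential smallness of the off-center pieces via \eqref{eqn: mq-bound}, \eqref{eqn: int-away} and Proposition \ref{prop: taylor}, and in the central region the replacement of the difference quotient by $m'_{\mathrm{sc}}(\gamma)$ through \eqref{eqn: quotient-bound}, with the main term given by the square of the single-variable saddle point integral. The only minor difference is bookkeeping: you bound the resulting remainder \emph{relative} to the Gaussian main term (relative error $N^{-(2\tau-\delta_1-\epsilon)}$ using that the modulus integral has the same order $1/(NG''(\gamma))$), whereas the paper states an absolute bound for \eqref{eqn: off-center} and then compares it with $1/(NG''(\gamma))$ via \eqref{eqn: G''-lwr-bound} and \eqref{eqn: Gk_bound}.
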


\section{Proof of Theorem \ref{thm: clt}}
We start by computing the asymptotic variance of the overlap.
\begin{lemma}[Computation of the variance]\label{lem: variance}
Let $\gamma$ be the saddle point defined in Section \ref{sec: gamma}. With overwhelming probability, we have
\begin{equation}
e^{\frac{t^2}{4\beta^2}m'_{\mathrm{sc}}(\gamma)}=e^{t^2}\big(1+O(N^{-c(\tau)})\big)
\end{equation}
for some $c(\tau)>0$.
\begin{proof}
We begin by approximating the saddle point $\gamma$ by the quantity
\[\hat{\gamma}=\frac{1}{2}\left(\beta_N+\frac{1}{\beta_N}\right),\]
the solution of 
\[m_{\mathrm{sc}}(x)=2(-x+\sqrt{x^2-1})=2\beta_N.\]
Note that for $\beta_N=1-cN^{-1/3+\tau}$, we have
\[\hat{\gamma}= 1+c^2N^{-2/3+2\tau}+O(N^{-2+3\tau}).\]

Next, we note:
\[ m_{sc}'(x)=-2+\frac{2x}{\sqrt{x^2-1}}.\]
By \eqref{eqn: sc_law}, combined with the derivative bound 
\[m'_{\mathrm{sc}}(x)> N^{1/3-\tau+\epsilon/2}\]
in the region $x>1+N^{-2/3+2\tau-\epsilon}$, a Taylor expansion of $G'(z)$ about $\hat{\gamma}$ (see for example \cite[Corollary 5.2]{baiklee}) shows that $G'(\gamma)=0$ for some $\gamma>\lambda_1$ with
\begin{equation}
|\hat{\gamma}-\gamma|\le CN^{-2/3-2\tau+\epsilon},
\end{equation}
with overwhelming probability, for any $\epsilon>0$.
From this, we find
\[(1-\beta_N^2)|m_{\mathrm{sc}}'(\gamma)-m_{\mathrm{sc}}'(\hat{\gamma})|\le N^{-c(\tau)}.\]
It follows that we can replace $\gamma$ with $\hat{\gamma}$ at the price of a multiplicative error of order $(1+O(N^{-c}))$.

Elementary computations then give
\begin{align*}
\sqrt{\hat{\gamma}^2-1}&=\sqrt{\frac{1}{4}\left(\beta+\frac{1}{\beta}\right)^2-1}\\
&=\frac{1}{2}\left|\beta-\frac{1}{\beta}\right|=\frac{1}{2}\frac{1-\beta^2}{\beta},
\end{align*}
\[\frac{\hat{\gamma}}{\sqrt{\hat{\gamma}^2-1}}=\frac{1+\beta^2}{1-\beta^2},\]
so that finally we have
\begin{equation*}
(1-\beta^2)m'_{\mathrm{sc}}(\hat{\gamma})=-2(1-\beta^2)+2(1+\beta^2)= 4\beta^2,
\end{equation*}
and so
\[e^{\frac{t^2(1-\beta_N^2)}{4\beta^2}m'_{\mathrm{sc}}(\hat{\gamma})}=e^{t^2}.\]

\end{proof}
\end{lemma}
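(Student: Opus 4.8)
The plan is to replace the random saddle point $\gamma$ by an explicit deterministic proxy, to evaluate $m_{\mathrm{sc}}'$ at the proxy by hand, and then to estimate the error incurred in the replacement. The natural proxy is
\[
\hat\gamma=\frac{1}{2}\Bigl(\beta_N+\frac{1}{\beta_N}\Bigr),
\]
the solution of $m_{\mathrm{sc}}(x)=2\beta_N$; for $\beta_N=1-cN^{-1/3+\tau}$ an elementary expansion gives $\hat\gamma=1+c^2N^{-2/3+2\tau}+O(N^{-2+3\tau})$, so $\hat\gamma$ lies at distance of order $N^{-2/3+2\tau}$ from the spectral edge $1$. The quantity we actually must identify is the exponential factor $\exp\bigl(\frac{t^2(1-\beta_N^2)}{4\beta_N^2}m_{\mathrm{sc}}'(\gamma)\bigr)$ produced by the saddle point approximation of the numerator in Proposition~\ref{prop: saddle}.

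The first step is to show that $\gamma$ is very close to $\hat\gamma$. The equation $G'(\gamma)=0$ reads $m_N(\gamma)=2\beta_N=m_{\mathrm{sc}}(\hat\gamma)$, hence
\[
m_{\mathrm{sc}}(\gamma)-m_{\mathrm{sc}}(\hat\gamma)=m_{\mathrm{sc}}(\gamma)-m_N(\gamma).
\]
By Lemma~\ref{lem: gamma-dist} together with the rigidity estimate \eqref{eqn: rigidity} for $\lambda_1$, the point $\gamma$ lies in the domain $S$ of Theorem~\ref{thm: outside-sc} with $\kappa(\gamma)\asymp N^{-2/3+2\tau}$; taking $\eta\downarrow0$ in \eqref{eqn: sc_law} by continuity then gives $|m_{\mathrm{sc}}(\gamma)-m_N(\gamma)|\le N^{-1/3-2\tau+\delta}$ with overwhelming probability. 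Since $m_{\mathrm{sc}}'(x)=-2+2x(x^2-1)^{-1/2}$ is of size at least $N^{1/3-\tau}$ throughout the region $x-1\gtrsim N^{-2/3+2\tau-\epsilon}$, which contains the whole segment $[\gamma,\hat\gamma]$, the mean value theorem yields $|\gamma-\hat\gamma|\le N^{-2/3-\tau+\epsilon}$ with overwhelming probability.

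The second step transfers $m_{\mathrm{sc}}'$ from $\gamma$ to $\hat\gamma$. Differentiating once more, $m_{\mathrm{sc}}''(x)=-2(x^2-1)^{-3/2}$, which is of size of order $N^{1-3\tau}$ on $[\gamma,\hat\gamma]$, while $1-\beta_N^2\asymp N^{-1/3+\tau}$; combining these with the bound on $|\gamma-\hat\gamma|$ from the previous step gives
\[
(1-\beta_N^2)\bigl|m_{\mathrm{sc}}'(\gamma)-m_{\mathrm{sc}}'(\hat\gamma)\bigr|\le(1-\beta_N^2)\max_{[\gamma,\hat\gamma]}|m_{\mathrm{sc}}''|\cdot|\gamma-\hat\gamma|\le N^{-c(\tau)}
\]
for any $c(\tau)<3\tau$. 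Because the coefficient $\frac{t^2(1-\beta_N^2)}{4\beta_N^2}$ is itself $O(N^{-1/3+\tau})$, in particular bounded, exponentiating this additive error costs only a multiplicative factor $1+O(N^{-c(\tau)})$. It therefore remains to evaluate $(1-\beta_N^2)m_{\mathrm{sc}}'(\hat\gamma)$ exactly: from $\hat\gamma=\frac{1}{2}(\beta_N+\beta_N^{-1})$ one gets $\sqrt{\hat\gamma^2-1}=\frac{1}{2}(\beta_N^{-1}-\beta_N)=\frac{1-\beta_N^2}{2\beta_N}$, hence $\hat\gamma/\sqrt{\hat\gamma^2-1}=\frac{1+\beta_N^2}{1-\beta_N^2}$, and therefore
\[
(1-\beta_N^2)\,m_{\mathrm{sc}}'(\hat\gamma)=-2(1-\beta_N^2)+2(1+\beta_N^2)=4\beta_N^2,
\]
so $\exp\bigl(\frac{t^2(1-\beta_N^2)}{4\beta_N^2}m_{\mathrm{sc}}'(\hat\gamma)\bigr)=e^{t^2}$ identically in $N$. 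Combining with the previous paragraph completes the proof.

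The step I expect to be the main obstacle is the estimate on $|\gamma-\hat\gamma|$ and its use in the transfer step: since $m_{\mathrm{sc}}''$ blows up like $\kappa^{-3/2}$ near the edge while $\kappa$ is only polynomially small, of order $N^{-2/3+2\tau}$, a crude localization of the saddle point is insufficient, and one genuinely needs the sharp form of the local law outside the spectrum in \eqref{eqn: sc_law} (with its $\kappa^{-1}$ gain near the edge) together with the matching lower bound $|m_{\mathrm{sc}}'|\gtrsim N^{1/3-\tau}$, as well as the input from eigenvalue rigidity that places $\gamma$ inside the regime where these bounds are valid. Once these are in hand the remaining computations are elementary.
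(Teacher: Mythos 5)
Your proposal is correct and follows essentially the same route as the paper: approximate the random saddle point by $\hat\gamma=\frac{1}{2}\bigl(\beta_N+\beta_N^{-1}\bigr)$ using the local law \eqref{eqn: sc_law} outside the spectrum together with the lower bound $m'_{\mathrm{sc}}\gtrsim N^{1/3-\tau}$ near the saddle, transfer $m'_{\mathrm{sc}}$ from $\gamma$ to $\hat\gamma$ via the bound $|m''_{\mathrm{sc}}|\lesssim N^{1-3\tau}$, and then evaluate $(1-\beta_N^2)m'_{\mathrm{sc}}(\hat\gamma)=4\beta_N^2$ exactly (correctly interpreting the exponent with the factor $1-\beta_N^2$, as in the paper's proof and its use in Proposition \ref{prop: saddle}). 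Your intermediate estimate $|\gamma-\hat\gamma|\le N^{-2/3-\tau+\epsilon}$ is slightly weaker than the paper's stated $N^{-2/3-2\tau+\epsilon}$ (and is in fact the exponent that the quoted local law directly yields), but it still gives $(1-\beta_N^2)\,|m'_{\mathrm{sc}}(\gamma)-m'_{\mathrm{sc}}(\hat\gamma)|\le N^{-c(\tau)}$ with $c(\tau)<3\tau$, so the conclusion is unaffected.
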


\begin{proof}[Proof of Theorem \ref{thm: clt}]
Starting from the representation \eqref{eqn: josephus} and using Proposition \ref{prop: saddle} for the numerator and Proposition \ref{prop: taylor} for the denominator, we have
\begin{align*}
e^{t\langle R_{12}\rangle_{\beta,N}}&=\frac{-\frac{4\pi}{NG''(\gamma)}e^{\frac{t^2(1-\beta_N^2)}{4\beta_N^2}m'_{\mathrm{sc}}(\gamma)} \left(1+O(N^{-c(\tau)})\right)}{-\frac{4\pi}{N G''(\gamma)}(1+O(N^{-c'(\tau)}))}\\
&=e^{\frac{t^2(1-\beta_N^2)}{4\beta_N^2}m'_{\mathrm{sc}}(\gamma)}(1+O(N^{-c})).
\end{align*}
We then conclude using Lemma \ref{lem: variance}. This derivation is valid on the set of overwhelming probability where the statements in Theorem \ref{thm: sc-law} and Theorem \ref{thm: outside-sc} hold.
\end{proof}

\end{document}